\documentclass[11pt]{amsart}
\usepackage{amsmath,amssymb,amsthm,a4wide,url,comment}
\usepackage{graphicx}

\newtheorem{theorem}{Theorem}[section]  
\newtheorem{lemma}[theorem]{Lemma} 
 
\newtheorem{proposition}[theorem]{Proposition}

\newtheorem{corollary}[theorem]{Corollary}
\theoremstyle{definition}

\newtheorem{remark}[theorem]{Remark}
\newtheorem*{remark*}{Remark}
\newcommand{\Z}{\mathbb{Z}}

\newcommand{\C}{\mathbb{C}}
\newcommand{\Q}{\mathbb{Q}}

\newcommand{\be}{\mathbf{e}}
\newcommand{\strutd}{ \raisebox{-1mm}{\includegraphics*[width=7mm]{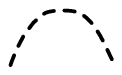}} }
\newcommand{\Aoned}{\raisebox{-1mm}{\includegraphics*[width=3mm]{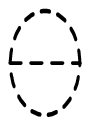}}} \newcommand{\Atwod}{\raisebox{-2mm}{\includegraphics*[width=5mm]{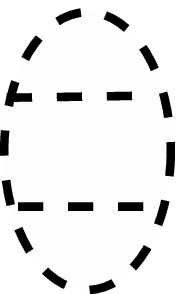}}} \newcommand{\Athreed}{\raisebox{-3mm}{\includegraphics*[width=5mm]{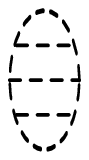}}} 
 
\newcommand{\Donetwod}{\raisebox{-2mm}{\includegraphics*[width=6mm]{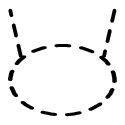}}}
\newcommand{\Donefourd}{\raisebox{-2mm}{\includegraphics*[width=6mm]{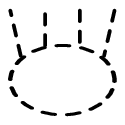}}}
\newcommand{\Donesixd}{\raisebox{-2mm}{\includegraphics*[width=6mm]{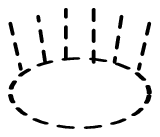}}}
\newcommand{\Dtwotwod}{\raisebox{-2mm}{\includegraphics*[width=4mm]{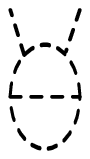}}}
\newcommand{\Dtwofourd}{\raisebox{-2mm}{\includegraphics*[width=6mm]{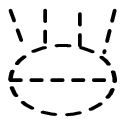}}}
\newcommand{\Dthreetwod}{\raisebox{-2mm}{\includegraphics*[width=4mm]{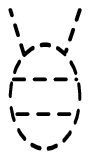}}}
\newcommand{\pairingcommaa}{\, \raisebox{-1mm}{,} \,}
\newcommand{\pairingcommab}{\, \raisebox{-3mm}{,} \,}
\newcommand{\pairingperiod}{\, \raisebox{-3mm}{.} \,}

 \makeatletter
    
    \@addtoreset{equation}{section}
  \makeatother

\title[LMO invariant constrraint for surgery problems]{On LMO invariant constraints for cosmetic surgery and other surgery problems for knots in $S^{3}$}
\author{Tetsuya Ito}
\address{Department of Mathematics, Graduate School of Science, Osaka University \\ 1-1 Machikaneyama Toyonaka, Osaka 560-0043, JAPAN}
\email{tetito@math.sci.osaka-u.ac.jp}
\subjclass[2010]{Primary~57M27 
, Secondary~57M25}
\urladdr{http://www.math.sci.osaka-u.ac.jp/~tetito/}
\keywords{LMO invariant, cosmetic surgery, characterizing slope, Lens space surgery}

\begin{document}

\begin{abstract}
We use the LMO invariant to find constraints for a knot to admit a purely or reflectively cosmetic surgery. We also get a constraint for knots to admit a Lens space surgery, and some information for characterizing slopes.
\end{abstract}

\maketitle

\section{Introduction}

For a knot $K$ in $S^{3}$ and $r =p/q \in \Q \cup \{\infty\}$, let $S^{3}(K,r)$ be the oriented closed 3-manifold obtained by the Dehn surgery on $K$ along the slope $r$.
We denote the 3-manifold $M$ with opposite orientation by $-M$, and we write $M \cong M'$ if two 3-manifolds are homeomorphic by an orientation preserving homeomorphism.

Two Dehn surgeries along a knot $K$ with different slopes $r$ and $r'$ are \emph{purely cosmetic} if $S^{3}(K,r) \cong S^{3}(K,r')$ and \emph{reflecitively cosmetic} (or, \emph{chirally cosmetic}) if $S^{3}(K,r) \cong -S^{3}(K,r')$.

A famous cosmetic surgery conjecture (for a knot in $S^{3}$) \cite[Problem 1.81]{kir} states that a non-trivial knot $K$ does not admit purely cosmetic surgeries. The case one of the slope $r=\infty$ was shown in \cite{gl}, which particularly says that two knots in $S^{3}$ are equivalent if and only if they have the homeomorphic complements.

There are various constraints for two Dehn surgeries are purely cosmetic. Among them, using Heegaard Floer homology theory in \cite[Theorem 1.2]{nw} the following strong restrictions are shown.
\begin{equation}
\label{eqn:NW} 
\begin{split}
&
\mbox{If } S^{3}(K,p\slash q) \cong S^{3}(K,p'\slash q'), \mbox{ then } p' \slash q'=\pm p\slash q \mbox{ and } q^{2} \equiv -1  \pmod p.\\ &\mbox{In particular, } L(p,q) \cong L(p',q')
\end{split}
\end{equation}

The cosmetic surgery conjecture can be regarded as a statement saying that when we fix a knot $K$ then the Dehn surgery gives an injective map
$S^{3}(K,\ast):\{\mbox{Slopes}\}=\Q \cup \{\infty\} \rightarrow \{\mbox{(oriented) 3-manifolds}\}$
except the case $K$ is the unknot. In this point of view, it is natural to ask the injectivity of the Dehn surgery map when we fix a slope; Is the Dehn surgery map
$S^{3}(\ast,r):\{\mbox{Knots}\} \rightarrow \{\mbox{(oriented) 3-manifolds}\}$ injective ?
A slope $r$ is a called \emph{characterizing slope} of $K$ if the answer is affirmative, that is, $S^{3}(K,r) \cong S^{3}(K',r)$ implies $K = K'$.

Compared with purely cosmetic surgeries, a situation for characterizing slopes is more complicated. There are various examples of non-characterizing slopes. Among them, in \cite{bamo} hyperbolic knots with infinitely many (integral) non-characterizing slopes are given.
On the other hand, if $K$ is the unknot \cite{kmos,os2}, trefoil, or the figure-eight knot \cite{os1} then all the slopes are characterizing. Moreover, if $K$ is a torus knot, a slope $r$ is characterizing provided $r$ is sufficiently large \cite[Theorem 1.3]{nz}, whereas some small slopes are not characterizing.

In this paper we use the LMO invariant to study a structure of Dehn surgery along knots. We obtain various constraints for a knot to admit a purely or reflectively cosmetic surgery, or, a slope $r$ to be characterizing.

The LMO invariant is an invariant of closed oriented 3-manifolds which takes value in certain graded algebra $\mathcal{A}(\emptyset)$. The degree one part $\lambda_1$ of the LMO invariant is equal to the Casson-Walker invariant \cite{lmmo} that satisfies the following surgery formula
\begin{equation}
\label{eqn:LMO1} \lambda_{1}(S^{3}(K,p \slash q))=\frac{1}{2}a_{2}(K) \frac{q}{p} + \lambda_{1}(L(p,q)).
\end{equation}
Here $a_{2}(K)$ denotes the coefficients of $z^{2}$ of the Conway polynomial, and $L(p,q) = S^{3}(\mathsf{Unknot},p\slash q)$ denote the $(p,q)$-Lens space.

Using (\ref{eqn:NW}) and the surgery formula (\ref{eqn:LMO1}) we immediately get the following constraint for cosmetic surgery and characterizing slopes. (In \cite{boli}, this is proved without using (\ref{eqn:NW}) --  instead they used Casson-Gordon invariant to get an additional constraint.)

\begin{theorem}\cite[Proposition 5.1]{boli}
\label{theorem:LMO1}
Let $K$ and $K'$ be knots in $S^{3}$ and $r,r' \in \Q \setminus \{0\}$ with $r\neq r'$.
\begin{enumerate}
\item[(i)] If $S^{3}(K,r)\cong S^{3}(K',r)$ then $a_{2}(K)=a_{2}(K')$.
\item[(ii)] If $S^{3}(K,r)\cong S^{3}(K,r')$ then $a_{2}(K)=0 \ (=a_{2}({\sf Unknot}))$.
\end{enumerate}
\end{theorem}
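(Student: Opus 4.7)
The plan is to combine the surgery formula (\ref{eqn:LMO1}) for $\lambda_1$ with the Ni--Wu restrictions (\ref{eqn:NW}) from Heegaard Floer homology. In both parts, the homeomorphism of oriented 3-manifolds forces equality of the LMO invariants, hence in particular of their degree-one coefficients, and everything reduces to equating two instances of the right-hand side of (\ref{eqn:LMO1}).

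For part (i), write $r=p/q$ with $p,q$ coprime and $p\neq 0$. Applying (\ref{eqn:LMO1}) to both sides of $S^{3}(K,r)\cong S^{3}(K',r)$ gives
\[
\frac{1}{2}a_{2}(K)\frac{q}{p} + \lambda_{1}(L(p,q)) \;=\; \frac{1}{2}a_{2}(K')\frac{q}{p} + \lambda_{1}(L(p,q)).
\]
The lens space terms are literally identical and cancel, and since $r\neq 0$ the factor $q/p$ is nonzero, so $a_{2}(K)=a_{2}(K')$.

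For part (ii), write $r=p/q$ and $r'=p'/q'$. I would first invoke (\ref{eqn:NW}): because $r\neq r'$, it forces $r'=-r$ and supplies an orientation-preserving homeomorphism $L(p,q)\cong L(p',q')$, so $\lambda_{1}(L(p,q))=\lambda_{1}(L(p',q'))$. Applying (\ref{eqn:LMO1}) to each slope, equating the two values of $\lambda_1$, and cancelling the lens space contributions yields
\[
a_{2}(K)\left(\frac{q}{p}-\frac{q'}{p'}\right)=0.
\]
The condition $r'=-r$ is equivalent to $q'/p'=-q/p$, so the parenthesized factor equals $2q/p$, which is nonzero. Hence $a_{2}(K)=0$.

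I expect no substantial obstacle: once (\ref{eqn:NW}) and (\ref{eqn:LMO1}) are available the argument is two lines of algebra. The only delicate point is that the cancellation of lens space terms in part (ii) really requires the orientation-preserving homeomorphism $L(p,q)\cong L(p',q')$ produced by (\ref{eqn:NW}); this is precisely the input that makes the Casson--Gordon argument used in \cite{boli} unnecessary.
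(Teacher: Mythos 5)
Your argument is correct and is exactly the route the paper indicates: the paper does not write out a proof of Theorem \ref{theorem:LMO1} but states that it follows immediately from the Ni--Wu restrictions (\ref{eqn:NW}) together with the Casson--Walker surgery formula (\ref{eqn:LMO1}), which is precisely what you do (part (i) needing only the surgery formula, part (ii) additionally using $r'=-r$ and the orientation-preserving homeomorphism $L(p,q)\cong L(p',q')$ to cancel the lens space terms). The only cosmetic slip is a dropped factor of $\tfrac12$ in the displayed equation of part (ii), which does not affect the conclusion.
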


Our purpose is to get further constraints that generalize Theorem \ref{theorem:LMO1} by looking at higher order part of the LMO invariants.

Two knots $K$ and $K'$ are called \emph{$C_{n+1}$-equivalent} if $v(K)=v(K')$ for all finite type invariant $v$ whose degree is less than or equal to $n$. 
A knot which is $C_{n+1}$ equivalent to the unknot is called a \emph{$C_{n+1}$-trivial} knot.
In \cite{gu,ha} it is shown that two knots are $C_{n+1}$-equivalent if and only if they are moved each other by certain local moves called $C_{n+1}$-moves. 

In this terminology, Theorem \ref{theorem:LMO1} can be understood that Dehn surgery characterizes a knot or a slope \emph{up to $C_3$-equivalence}: (i) says that if Dehn surgeries of two knots $K$ and $K'$ along the same slope are homeomorphic then $K$ and $K'$ are $C_{3}$-equivalent, and (ii) says that the cosmetic surgery conjecture is true unless $K$ is $C_3$-trivial.

In \cite{bl} Bar-Natan and Lawrence gave a rational surgery formula of the LMO invariant. First we write down a rational surgery formula for degree two and three part of the (primitive) LMO invariants of $S^{3}(K,r)$.

\begin{theorem}[Surgery formula for $\lambda_2$ and $\lambda_3$]
\label{theorem:LMO23}
Let $K$ be a knot in $S^{3}$.
\begin{eqnarray*}
\lambda_2(S^{3}(K,p/q))\!\!&\!\! = \!&\!\!\!
\left(\!v_2(K)^{2} + \frac{1}{24}v_2(K)+\frac{5}{2}v_{4}(K)\!\right)\frac{q^{2}}{p^{2}}-v_3(K)\frac{q}{p} + \frac{v_2(K)}{24}\left( \frac{1}{p^{2}}-1\right)\\
   & & \hspace{0.4cm}+ \lambda_{2}(L(p,q))\\
& \!\!=\!&\!\!\! \left(\!\frac{7a_2(K)^2-a_2(K)-10a_{4}(K)}{8}\! \right)\frac{q^{2}}{p^{2}} -v_{3}(K)\frac{q}{p} + \frac{a_{2}(K)}{48}
\left(1-\frac{1}{p^{2}}\right)\\
 & & \hspace{0.4cm}+ \lambda_{2}(L(p,q))\\
\end{eqnarray*}
\begin{eqnarray*}
\lambda_{3}(S^{3}(K,p/q)) \!
&\!\! = \!& \!
-\left(\!\frac{35}{4}v_6(K)\!+\!\frac{5}{24}v_4(K)\!+\!10v_2(K)v_4(K)\!+\!\frac{4}{3}v_2(K)^{3}\!+\! \frac{1}{12}v_2(K)^{2}\!\right)\frac{q^{3}}{p^{3}}\\
& &\! - \left( \frac{5}{24}v_4(K) +\frac{1}{288}v_2(K)+\frac{1}{12}v_2(K)^{2}\right)\frac{q}{p^{3}} \\
& &\! + \left(\frac{5}{2}v_{5}(K)+ 2v_3(K)v_2(K)+\frac{1}{24}v_3(K)\right)\frac{q^2}{p^2} + \frac{v_3(K)}{24}\left(\frac{1}{p^2}-1\right)\\
& &\! - \left(w_4(K)-\frac{1}{12}v_2(K)^{2}-\frac{1}{288}v_2(K)-\frac{5}{24}v_4(K)\right)\frac{q}{p}  +\lambda_{3}(L(p,q))
\end{eqnarray*}

\end{theorem}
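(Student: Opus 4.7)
The plan is to derive both formulas by applying the Bar-Natan--Lawrence rational surgery formula \cite{bl} for the LMO invariant and then expanding the result to the appropriate degree. Recall that in the BL formalism, $Z^{LMO}(S^{3}(K,p/q))$ is computed from the Kontsevich integral $Z(K) \in \mathcal{A}(\uparrow)$ by first applying the standard normalization (the $\check{Z}$/Wheeling operation), and then performing an Aarhus--style formal Gaussian integration in which the framing strut contributes the linking form $q/p$ and the legs of internal Jacobi diagrams are contracted pairwise with propagator $q/p$, with an overall $|H_{1}|=|p|$-dependent prefactor.

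First, I would write the low-degree part of the normalized Kontsevich integral of $K$ explicitly in the Jacobi-diagram basis, pushing the expansion far enough that every connected Jacobi diagram of internal degree $\le 3$ can be produced after contractions: this means going up through degree $6$, with leading coefficients $v_{2}(K),\ldots,v_{6}(K)$ together with the extra primitive wheel-type invariant that will play the role of $w_{4}(K)$ in the $\lambda_{3}$ formula. Applying the surgery formula at slope $p/q$ and retaining only connected diagrams of internal degree $2$ (respectively $3$) then produces the claimed coefficient of each monomial $q^{k}/p^{k}$: each such monomial corresponds to a specific pattern of leg-contractions, whose coefficient is a product of the relevant $v_{i}(K)$ (or $w_{4}(K)$) with a combinatorial factor coming from the symmetries of the Jacobi diagram and from the Aarhus integral.

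The $K$-independent remainder matches $\lambda_{n}(L(p,q))$, as one verifies by specializing the whole computation to the unknot and comparing with the BL formula for Lens spaces. The ``correction'' terms of shape $(1/p^{2}-1)$ and $(1/p^{2}-1)q/p$ arise from the self-contraction of the wheel part of $Z(K)$ introduced by the $\Omega$/Wheeling normalization, which is why they are proportional to $v_{2}(K)$ and $v_{3}(K)$ respectively. To obtain the second form of the $\lambda_{2}$ expression, I would rewrite the primitive Vassiliev invariants $v_{2}(K)$ and $v_{4}(K)$ in terms of the Conway coefficients via the classical identities $v_{2}(K)=a_{2}(K)$ and a linear relation expressing $v_{4}(K)$ as a combination of $a_{2}(K)^{2}, a_{2}(K)$ and $a_{4}(K)$.

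The hard part is entirely bookkeeping: organizing the Aarhus contractions of the degree-$\le 6$ part of $Z(K)$ so that no diagram is counted with the wrong symmetry factor, and tracking the Wheeling/$\Omega$ normalization and the sign conventions for $Z^{LMO}$ consistently. No conceptual step is subtle once the low-degree expansion of $Z(K)$ is written down; the delicacy is that several distinct contraction patterns contribute to the same monomial in $q/p$, and their combinatorial coefficients must be summed accurately to reproduce the stated fractions $\frac{35}{4}, \frac{5}{24}, \frac{1}{288}$, etc.
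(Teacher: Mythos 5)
Your plan is the same as the paper's: expand $Z^{\sigma}(K)$ in the Jacobi-diagram basis through all bigradings $(e,k)$ with $e+\frac{k}{2}\le 3$, apply the wheeling operator $\partial_{\Omega^{-1}}$, take the disjoint union with $\Omega_q$, and evaluate the Bar-Natan--Lawrence pairing against $\exp_{\sqcup}(-\tfrac{q}{2p}\cdot\mathrm{strut})$, identifying the $K$-independent remainder with $\lambda_n(L(p,q))$ via the unknot/lens-space formula. Two caveats. First, the entire content of this theorem is the explicit coefficients, and your proposal defers all of that to ``bookkeeping''; the paper actually carries it out by listing the finitely many pairings of wheels and theta-type diagrams with powers of the strut (e.g.\ the two-legged wheel paired with one strut gives $2$ times the theta graph, the six-legged wheel paired with three struts gives $420$ times the degree-three generator), so a complete write-up must tabulate those numbers. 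Second, the one identity you state explicitly, $v_2(K)=a_2(K)$, is wrong in the normalization used here: with $v_2$ defined as the coefficient of the two-legged wheel in $\log_{\sqcup}(Z^{\sigma}(K)\sqcup\Omega^{-1})$ one has $v_2(K)=-\tfrac{1}{2}a_2(K)$ (and similarly $v_4$ involves $-\tfrac12 a_4 -\tfrac1{24}a_2+\tfrac14 a_2^2$), so using $v_2=a_2$ would corrupt the passage from the first to the second form of the $\lambda_2$ formula. Fix the normalization and supply the contraction table and the argument goes through exactly as in the paper.
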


Here $v_2(K),v_3(K),v_4(K),w_4(K),v_5(K)$ and $v_6(K)$ are certain canonical finite type invariant of the knot $K$ (see Section \ref{section:LMO} for details -- as we will see in Lemma \ref{lemma:AJ}, except $v_5$ they are determined by the Alexander and the Jones polynomial). Also, $a_{2n}(K)$ is the coefficient of $z^{2n}$ in $\nabla_{K}(z)$, the Conway polynomial of $K$.

The degree two part of the LMO invariant (combined with (\ref{eqn:NW})) gives rise to the followings.

\begin{corollary}
\label{corollary:LMO2}
Let $K$ and $K'$ be knots in $S^{3}$, and $r,r' \in \Q \setminus \{0\}$ with $r \neq r'$.
\begin{enumerate}
\item[(i)] If $S^{3}(K,r) \cong S^{3}(K,r')$ then $v_3(K)=0$. 
\item[(ii)] If $S^{3}(K,r) \cong -S^{3}(K,-r)$ then $v_3(K)=0$. 
\item[(iii)] If $S^{3}(K,r) \cong -S^{3}(K,r')$ for $r' \neq \pm r$ then either
\begin{itemize}
\item[(iii-a)] $v_{3}(K)=0$, or,
\item[(iii-b)] $v_{3}(K) \neq 0$ and 
$\displaystyle \frac{rr'}{r+r'} = \frac{7a_2(K)^2-a_2(K)-10a_{4}(K)}{8v_{3}(K)}. $
\end{itemize}
\item[(iv)] If $S^{3}(K,r) \cong S^{3}(K',r)$ then either
\begin{itemize}
\item[(iv-a)] $a_{4}(K)=a_{4}(K')$, $v_{3}(K)=v_{3}(K')$, or,
\item[(iv-b)] $a_{4}(K) \neq a_{4}(K')$, $v_{3}(K)\neq v_{3}(K')$, and 
$\displaystyle  r = \frac{5(a_{4}(K)-a_{4}(K'))}{4(v_{3}(K)-v_{3}(K'))}. $
\end{itemize}
\end{enumerate}
\end{corollary}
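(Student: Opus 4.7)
The plan is to derive each part by specializing the $\lambda_2$ surgery formula of Theorem \ref{theorem:LMO23} to the hypothesis at hand, using the orientation-reversal rule $\lambda_n(-M)=(-1)^n\lambda_n(M)$ for the primitive LMO invariants. For $n=2$ this means that $\lambda_2$ is an invariant of the \emph{unoriented} 3-manifold; in particular $\lambda_2(L(p,-q))=\lambda_2(-L(p,q))=\lambda_2(L(p,q))$. The consequence to keep in mind is that in the $\lambda_2$ surgery formula every term on the right-hand side is unchanged under $q\mapsto -q$ except the linear piece $-v_3(K)\,q/p$.

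For (i) and (ii) I would first establish the identity $\lambda_2(S^3(K,r))=\lambda_2(S^3(K,-r))$. In (i) this follows from (\ref{eqn:NW}), which forces $r'=-r$; in (ii) it is immediate from orientation-evenness, since $\lambda_2(-S^3(K,-r))=\lambda_2(S^3(K,-r))$. In both cases, substitution into the surgery formula collapses every term except for the linear piece, yielding $-2v_3(K)(q/p)=0$. Since $r\in\mathbb{Q}\setminus\{0\}$ ensures $q/p\ne 0$, one concludes $v_3(K)=0$.

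For (iv), Theorem \ref{theorem:LMO1}(i) supplies $a_2(K)=a_2(K')$, and because the slope is common the Lens-space term $\lambda_2(L(p,q))$ is the same on both sides. Setting $A(K):=(7a_2(K)^2-a_2(K)-10a_4(K))/8$, the equation $\lambda_2(S^3(K,r))=\lambda_2(S^3(K',r))$ reduces to
\[
(A(K)-A(K'))\,\frac{q^2}{p^2}=(v_3(K)-v_3(K'))\,\frac{q}{p}.
\]
Using $a_2(K)=a_2(K')$, the coefficient $A(K)-A(K')$ becomes a constant multiple of $a_4(K)-a_4(K')$. Either both sides vanish, forcing $a_4(K)=a_4(K')$ together with $v_3(K)=v_3(K')$ (case (iv-a)), or one divides by $q/p\ne 0$ and solves for $r$, producing (iv-b).

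Part (iii) is the hardest. Orientation reversal preserves $|H_1|$, so $|p|=|p'|$, and after normalizing signs I may assume $p=p'$. Orientation-evenness of $\lambda_2$ then gives $\lambda_2(S^3(K,r))=\lambda_2(S^3(K,r'))$, and the surgery formula becomes
\[
A(K)\,\frac{q^2-(q')^{2}}{p^2}-v_3(K)\,\frac{q-q'}{p}+\lambda_2(L(p,q))-\lambda_2(L(p,q'))=0.
\]
The main obstacle will be controlling the Lens-space difference: the desired dichotomy is precisely that this difference vanishes. My plan is to invoke a Ni--Wu-type constraint adapted to the chirally cosmetic situation, forcing $L(p,q)\cong -L(p,q')$ as oriented manifolds, from which orientation-evenness gives $\lambda_2(L(p,q))=\lambda_2(L(p,q'))$. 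Once this is secured, dividing by $q-q'\ne 0$ reduces the equation to $A(K)(q+q')/p=v_3(K)$; then either $v_3(K)=0$ (case (iii-a)) or, using the identity $p/(q+q')=rr'/(r+r')$, we arrive at the formula of (iii-b).
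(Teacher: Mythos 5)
Your treatment of (i), (ii), and (iv) is correct and follows the paper's own argument: reduce (i) to $r'=-r$ via (\ref{eqn:NW}), use $\lambda_2(M)=\lambda_2(-M)$ in (ii), cancel everything but the linear term $-v_3(K)q/p$, and in (iv) use Theorem \ref{theorem:LMO1}(i) to kill all of $A(K)-A(K')$ except the $a_4$ contribution before solving for $r$.

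The one genuine problem is your plan for (iii). You correctly identify the Lens-space difference $\lambda_2(L(p,q))-\lambda_2(L(p,q'))$ as the obstacle, but the tool you propose — ``a Ni--Wu-type constraint adapted to the chirally cosmetic situation, forcing $L(p,q)\cong -L(p,q')$'' — does not exist in the paper and is not needed. The constraint (\ref{eqn:NW}) is stated only for \emph{purely} cosmetic surgeries, so you cannot invoke it (or an unproved analogue) when $S^3(K,r)\cong -S^3(K,r')$; as written, this step of your argument has no justification. The fix is already in the paper and is much simpler: by the rational surgery formula applied to the unknot, equation (\ref{eqn:LMO-lens}) shows that the reduced LMO invariant of $L(p,q)$ depends only on $p$, and (\ref{eqn:LMO23-Lens}) gives explicitly $\lambda_2(L(p,q))=\frac{1}{24}\bigl(\frac{1}{48p^2}-\frac{1}{48}\bigr)$. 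Since $|H_1|$ forces $p=p'$, the Lens-space terms cancel automatically, with no input about the parameters $q,q'$. Once that is substituted, your remaining algebra — dividing by $(q-q')/p\neq 0$ to get $A(K)(q+q')/p=v_3(K)$ and rewriting $p/(q+q')=rr'/(r+r')$ — is exactly the paper's computation and yields the dichotomy (iii-a)/(iii-b).
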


(i) was proven in \cite{iw} by a similar argument using Lescop's surgery formula of the Kontsevich-Kuperberg-Thurston invariant \cite{ko2,kt} (see Remark \ref{remark:KKT}). 
 
We note that the degree two part gives the following constraint for a knot to admit a Lens space surgery.

\begin{corollary}
\label{corollary:Lens}
If $S^{3}(K,p\slash q)$ is a Lens space, then 
\[ \left( \frac{7a_2(K)^2-a_2(K)-10a_{4}(K)}{8} \right)\frac{q^{2}}{p^{2}} -v_{3}(K)\frac{q}{p} + \frac{a_{2}(K)}{48}
\left(1-\frac{1}{p^{2}}\right) =0.\]
\end{corollary}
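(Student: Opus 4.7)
The plan is a direct application of the degree-two surgery formula of Theorem~\ref{theorem:LMO23}. If $S^{3}(K,p/q)$ is a Lens space, then since $|H_{1}(S^{3}(K,p/q))|=|p|$, it has the form $L(p,q')$ for some integer $q'$ coprime to $p$, after reorienting so $p>0$. Substituting this into the formula of Theorem~\ref{theorem:LMO23} converts the bracketed expression in the corollary into $\lambda_{2}(L(p,q'))-\lambda_{2}(L(p,q))$, so the corollary reduces to the identity $\lambda_{2}(L(p,q'))=\lambda_{2}(L(p,q))$.

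To establish this identity I would appeal to a structural property of the degree-two primitive LMO invariant on Lens spaces. The orientation-reversal rule $\lambda_{2}(-M)=\lambda_{2}(M)$ for the even-degree primitive part, combined with $-L(p,q)=L(p,-q)$ and the oriented equivalence $L(p,q)\cong L(p,q^{-1})$, shows that $\lambda_{2}(L(p,\cdot))$ is constant on the class $\{\pm q^{\pm 1}\}\pmod p$. Since the Lens space parameter $q'$ produced by surgery on $K$ need not lie in this class in general (for instance, $5$-surgery on $T(2,3)$ gives a Lens space $L(5,q')$ with $q'\equiv \pm 1\pmod 5$, fitting the class, but $7$-surgery on $T(2,3)$ gives $L(7,q')$ with $q'\equiv \pm 2^{\pm 1}\pmod 7$, which does not), the more fundamental statement required is that $\lambda_{2}(L(p,q))$ depends only on $p$, a property to be verified by direct evaluation of the LMO invariant of Lens spaces.

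The main obstacle is thus verifying this $q$-independence of $\lambda_{2}(L(p,q))$. I would attempt this through an explicit computation of the LMO invariant of Lens spaces --- for example via the Bar--Natan--Lawrence rational surgery formula applied to a standard surgery presentation of $L(p,q)$, or Le--Murakami--Ohtsuki's original integral formula --- and check that the $q$-dependent contributions (arising from higher-order Dedekind-sum-like terms) cancel identically in the degree-two primitive part, leaving a function of $p$ alone. Once this computation is in hand, the corollary follows immediately from the surgery formula, with no further input required.
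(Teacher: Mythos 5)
Your proposal is correct and takes essentially the same route as the paper: identify $p$ from $H_1$, substitute into the degree-two surgery formula, and reduce to the $q$-independence of $\lambda_2(L(p,q))$, which the paper obtains from the Bar--Natan--Lawrence computation $Z^{LMO}(L(p,q))=\langle \Omega, \Omega^{-1}\sqcup\Omega_{p}\rangle$ (recorded in Section 2 as the explicit value $\lambda_2(L(p,q))=\frac{1}{24}(\frac{1}{48p^2}-\frac{1}{48})$) --- exactly the fact you propose to verify.
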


By cyclic surgery theorem \cite{cgls}, if $K$ is not a torus knot, then $q=1$ hence we get
\begin{equation}
\label{eqn:Lens-obstruction}
a_{2}(K)p^{2} -48v_{3}(K)p+\left(42a_2(K)^2-7a_2(K)-60a_{4}(K) \right) =0.
\end{equation}

Combined with the fact that $a_{2}(K),4v_{3}(K)$ and $a_{4}(K)$ are integers, (\ref{eqn:Lens-obstruction}) brings some interesting informations. For example, a non-torus knot $K$ admitting lens surgery, $576v_{3}(K)^{2}-a_{2}(K)(42a_2(K)^2-7a_2(K)-60a_{4}(K))$ is a square number. If a non-torus knot $K$ admits more than one Lens space surgeries, the surgery slopes are successive integers \cite[Corollary 1]{cgls} so such a knot has $a_{2}(K) \neq \pm 1$.

The formula of degree three part is more complicated. Fortunately, as for cosmetic surgery, using (\ref{eqn:NW}) we get the following simple constraints.

\begin{corollary}
\label{corollary:LMO3}
Let $K$ and $K'$ be a knot in $S^{3}$ and $r= p\slash q \in \Q \setminus\{0\}$.
\begin{enumerate}
\item[(i)] If $S^{3}(K,r) \cong S^{3}(K,r')$ for $r'\neq r$, then 
\[ p^{2}(24w_{4}(K)-5v_{4}(K)) + 5v_{4}(K) + q^{2}(210v_{6}(K)+5v_{4}(K)) = 0.\]
\item[(ii)] If $S^{3}(K,r) \cong -S^{3}(K,-r)$, then $v_{5}(K)=0$.
\end{enumerate}
\end{corollary}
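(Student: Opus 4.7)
The plan is to combine the $\lambda_3$ surgery formula of Theorem \ref{theorem:LMO23} with the constraints already extracted from the degree~$\leq 2$ invariants and from \eqref{eqn:NW}, and to exploit the parity of $\lambda_3$ under orientation reversal. Throughout I will use that $\lambda_n(-M) = (-1)^n \lambda_n(M)$; for $n=3$ this says $\lambda_3$ is odd under orientation reversal, and in particular $\lambda_3(L(p,-q)) = -\lambda_3(L(p,q))$, since $L(p,-q) = -L(p,q)$ as oriented manifolds.

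\emph{Part (i).} Write $r = p/q$. By \eqref{eqn:NW}, we must have $r' = -p/q$ and $q^2 \equiv -1 \pmod{p}$. Theorem \ref{theorem:LMO1}(ii) forces $a_2(K) = 0$, hence $v_2(K) = 0$ (the two invariants vanish together, as one sees by comparing the two forms of the $\lambda_2$ formula in Theorem \ref{theorem:LMO23}); Corollary \ref{corollary:LMO2}(i) supplies $v_3(K) = 0$. I plug $v_2 = v_3 = 0$ into the $\lambda_3$ surgery formula and set $\lambda_3(S^3(K,p/q)) = \lambda_3(S^3(K,p/(-q)))$. The even-in-$q$ contributions (in particular the $v_5$ term) cancel and the odd-in-$q$ ones double, leaving an equation whose nontrivial content is a linear combination of $v_4, v_6, w_4$ times $q^3/p^3, q/p^3, q/p$, plus the difference $\lambda_3(L(p,q)) - \lambda_3(L(p,-q)) = 2\lambda_3(L(p,q))$.

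The key auxiliary step is to show $\lambda_3(L(p,q)) = 0$ under the hypothesis $q^2 \equiv -1 \pmod{p}$. From this congruence one has $-q \equiv q^{-1} \pmod{p}$, so the classical classification of lens spaces provides an orientation-preserving homeomorphism $L(p,q) \cong L(p,q^{-1}) = L(p,-q)$; combined with $L(p,-q) = -L(p,q)$, the lens space $L(p,q)$ is amphichiral, and odd parity of $\lambda_3$ forces $\lambda_3(L(p,q)) = 0$. Clearing denominators in the surviving equation then produces the stated identity. I expect this amphicheirality step to be the main obstacle, since without it the residual $\lambda_3(L(p,q))$ would spoil the clean formula.

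\emph{Part (ii).} Write $r = p/q$. Corollary \ref{corollary:LMO2}(ii) gives $v_3(K) = 0$. Applying $\lambda_3$ to the homeomorphism $S^3(K,p/q) \cong -S^3(K,p/(-q))$ and using odd parity yields $\lambda_3(S^3(K,p/q)) + \lambda_3(S^3(K,p/(-q))) = 0$. Summing the two expressions from the surgery formula cancels all odd-in-$q$ contributions, and the lens space term $\lambda_3(L(p,q)) + \lambda_3(L(p,-q))$ vanishes automatically by odd parity. With $v_3 = 0$, the only surviving even-in-$q$ term is the $v_5$ one, which produces $5v_5(K) \cdot q^2/p^2 = 0$ and hence $v_5(K) = 0$.
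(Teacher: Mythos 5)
Your argument is correct and follows essentially the same route as the paper: reduce to $r'=-r$ via \eqref{eqn:NW}, kill $v_2$ and $v_3$ using Theorem~\ref{theorem:LMO1}(ii) and Corollary~\ref{corollary:LMO2}, and compare $\lambda_3$ of the two surgeries using the odd parity of $\lambda_3$ under orientation reversal. The only deviation is that what you flag as the main obstacle in (i) --- showing $\lambda_3(L(p,q))=0$ --- requires no amphicheirality argument via $q^2\equiv -1 \pmod p$, since \eqref{eqn:LMO23-Lens} already records that every odd-degree part of the reduced LMO invariant of any lens space vanishes (your detour is nonetheless valid).
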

Corollary \ref{corollary:LMO3} (i) leads to the following.
\begin{corollary}
\label{cor:c11}
The cosmetic surgery conjecture is true for all knots with less than or equal to 11 crossings, possibly except $10_{118}$.
\end{corollary}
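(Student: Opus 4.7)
The plan is to combine every constraint already derived with a finite computer search through a knot table. Suppose $K$ is a knot with at most eleven crossings admitting a purely cosmetic pair of surgeries along slopes $r \neq r'$. By Theorem~\ref{theorem:LMO1}(ii) we must have $a_{2}(K)=0$, and by Corollary~\ref{corollary:LMO2}(i) we must have $v_{3}(K)=0$. The Ni--Wu restriction (\ref{eqn:NW}) forces $r' = -r$, so writing $r = p/q$ in lowest terms with $p>0$, Corollary~\ref{corollary:LMO3}(i) reduces the existence of a cosmetic pair to the Diophantine equation
\[
p^{2}\bigl(24w_{4}(K)-5v_{4}(K)\bigr) + 5v_{4}(K) + q^{2}\bigl(210v_{6}(K)+5v_{4}(K)\bigr) = 0,
\]
subject to $\gcd(p,q)=1$ and $q^{2} \equiv -1 \pmod p$.

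I would first enumerate all prime knots of $\le 11$ crossings (for instance from KnotInfo) and use Lemma~\ref{lemma:AJ} to extract $a_{2}(K), v_{3}(K), v_{4}(K), w_{4}(K), v_{6}(K)$ directly from the tabulated Alexander and Jones polynomials. The two filters $a_{2}(K)=0$ and $v_{3}(K)=0$ eliminate the overwhelming majority of entries, leaving a short list of candidate knots. For each survivor, I would substitute the three invariants into the equation above and examine the triple
\[
A := 24w_{4}(K)-5v_{4}(K), \quad B := 5v_{4}(K), \quad C := 210v_{6}(K)+5v_{4}(K).
\]
In the generic situation $A$, $B$, $C$ will all be nonzero and of the same sign, in which case the left-hand side cannot vanish and $K$ is excluded. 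When one of $A, B, C$ vanishes the equation degenerates into a simpler form (for instance $B = -q^{2}C$) whose possible solutions can be listed in a couple of lines, and I would check in each such case that no admissible pair $(p,q)$ with $q^{2} \equiv -1 \pmod p$ actually arises.

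The main obstacle is not conceptual but bookkeeping: one has to carry out this check for every knot passing the first two filters and verify that exactly one of them, namely $10_{118}$, forces all three of $v_{4}, w_{4}, v_{6}$ to vanish, making the constraint vacuous so that the method gives no information. For the remaining knots I expect the sign test on $(A,B,C)$ to dispose of most cases immediately, with only a handful needing the refined analysis when some coefficient is zero. Trusting the tabulated Alexander and Jones polynomials, the corollary then follows.
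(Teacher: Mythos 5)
Your overall strategy --- filter the knot table by $a_2(K)=0$ and $v_3(K)=0$ and then test the survivors against the equation of Corollary \ref{corollary:LMO3}(i) --- is the same as the paper's, except that the paper does not redo the filtering: it quotes \cite{iw}, which had already confirmed the conjecture for all knots with at most $11$ crossings except the eight knots $10_{33}$, $10_{118}$, $10_{146}$, $11a_{91}$, $11a_{138}$, $11a_{285}$, $11n_{86}$, $11n_{157}$, and applies the degree-three constraint only to those eight.

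The genuine gap is in how you expect the final check to go, and in particular in your account of why $10_{118}$ survives. Using Lemma \ref{lemma:AJ} with $a_2(K)=0$, the constraint of Corollary \ref{corollary:LMO3}(i) becomes
\[
(19a_4(K)+j_4(K))\,p^{2} - 10a_4(K) - (420a_6(K)+80a_4(K))\,q^{2} = 0,
\]
and for the candidate knots the three coefficients do not all share a sign: one gets genuine generalized Pell equations $ap^{2}-bq^{2}=c$ (for instance $8p^{2}-40q^{2}=5$ for $10_{33}$ and $10_{146}$, and $8p^{2}-145q^{2}=5$ for $11n_{86}$), whose insolubility must be established by congruence arguments or by an algorithm for quadratic Diophantine equations \cite{aa,so}; the sign test you propose as the main workhorse essentially never applies. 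More importantly, your prediction that $10_{118}$ is the exception because $v_4=w_4=v_6=0$ there is false: $a_4(10_{118})=2$, so $v_4(10_{118})=-1\neq 0$, and the constraint is the nondegenerate equation $32p^{2}-20-1420q^{2}=0$. The method fails for $10_{118}$ not because the constraint is vacuous but because this Pell-type equation has infinitely many integer solutions, namely $p=20u+1065v$, $q=3u-160v$ with $u^{2}-2840v^{2}=1$. As written, your plan would misclassify $10_{118}$ and would not dispose of several of the other candidates, so the verification step needs to be replaced by an actual solvability analysis of the resulting Pell-type equations.
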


Using higher degree part of the LMO invariant, adding suitable $C_n$-equivalence assumptions we prove the following more direct generalizations of Theorem \ref{theorem:LMO1}.

\begin{theorem}
\label{theorem:LMOhigh1}
Let $K$ and $K'$ be a knot in $S^{3}$ and $r, r' \in \Q \setminus\{0\}$ with $r \neq r'$.
\begin{enumerate}
\item[(i)] Assume that $K$ and $K'$ are $C_{2m+2}$-equivalent.
If $S^{3}(K,r) \cong S^{3}(K',r)$ then $a_{2m+2}(K) = a_{2m+2}(K')$.
\item[(ii)] Assume that $K$ is $C_{4m+2}$-trivial.
If $S^{3}(K,r) \cong S^{3}(K,r')$ then $a_{4m+2}(K) = 0$.
\end{enumerate}
\end{theorem}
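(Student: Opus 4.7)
The plan is to apply the rational surgery formula of Bar-Natan--Lawrence at a carefully chosen degree, directly analogous to how Theorem \ref{theorem:LMO1} is deduced from the degree~$1$ formula (\ref{eqn:LMO1}), using $C_{n}$-equivalence to kill every finite type invariant of the knot except the one carrying the desired information.

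For part (i) I would work with $\lambda_{m+1}$. By the general structure of the Bar-Natan--Lawrence formula (as illustrated by Theorem \ref{theorem:LMO23} in the cases $m=1,2$) one can write
\[
\lambda_{m+1}(S^{3}(K,p/q)) = \lambda_{m+1}(L(p,q)) + P_{m+1}(K; 1/p, q/p),
\]
where $P_{m+1}(K;\,\cdot\,,\,\cdot\,)$ is a polynomial whose coefficients are polynomials in the canonical finite type invariants of $K$ of degree at most $2m+2$. Since $K$ and $K'$ are $C_{2m+2}$-equivalent, all invariants of degree $\leq 2m+1$ agree, and since the slopes coincide the lens space terms cancel, so $\lambda_{m+1}(S^{3}(K,r)) = \lambda_{m+1}(S^{3}(K',r))$ reduces to an equation in degree $2m+2$ invariants only. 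The structural claim I would need is that the only degree $2m+2$ invariant appearing in $P_{m+1}$ is a nonzero rational multiple of the Alexander-type invariant $a_{2m+2}$; this is consistent with the $m=1$ case of Theorem \ref{theorem:LMO23}, where the degree~$4$ contribution to $\lambda_{2}$ is $-\tfrac{10}{8}a_{4}\cdot q^{2}/p^{2}$ and the non-Alexander degree~$4$ invariant $w_{4}$ does not appear. Granted this, the equation yields $a_{2m+2}(K) = a_{2m+2}(K')$.

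For part (ii) I would work with $\lambda_{2m+1}$. By (\ref{eqn:NW}) the hypothesis $S^{3}(K,r) \cong S^{3}(K,r')$ with $r\ne r'$ forces $r' = -r$ and $L(p,q) \cong L(p,-q)$, so the lens space term is invariant under $q\mapsto -q$. The $C_{4m+2}$-triviality of $K$ makes every finite type invariant of $K$ of degree $\leq 4m+1$ vanish, so only degree $4m+2$ invariants enter $P_{2m+1}(K; 1/p, q/p)$. Computing
\[
0 \;=\; \lambda_{2m+1}(S^{3}(K,r)) - \lambda_{2m+1}(S^{3}(K,-r)) \;=\; P_{2m+1}(K; 1/p, q/p) - P_{2m+1}(K; 1/p, -q/p)
\]
kills the even-in-$q$ part and doubles the odd-in-$q$ part. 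Invoking the same isolation claim (only $a_{4m+2}$ enters at degree $4m+2$, with a nonzero coefficient at $q^{2m+1}/p^{2m+1}$), one concludes $a_{4m+2}(K) = 0$. The case $m=0$ recovers Theorem \ref{theorem:LMO1}(ii), and the case $m=1$ matches the observation following Corollary \ref{corollary:LMO3}.

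The main obstacle in both parts is establishing this isolation claim: that in the Bar-Natan--Lawrence formula for $\lambda_{m+1}$ (respectively $\lambda_{2m+1}$) the coefficient of the Alexander-type invariant $a_{2m+2}$ (respectively $a_{4m+2}$) is a nonzero rational and no non-Alexander degree $2m+2$ (respectively $4m+2$) invariant contributes. Diagrammatically this should reduce to showing that at the relevant order the only connected Jacobi diagram surviving the Aarhus integration is the wheel of length $2m+2$ (respectively $4m+2$), whose weight on the Kontsevich integral of $K$ is controlled by the Alexander polynomial through the Melvin--Morton--Rozansky theorem. Once this structural input is in place, what remains is routine parity and degree bookkeeping.
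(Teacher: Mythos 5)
Your strategy coincides with the paper's: reduce to the top-degree part of the Bar-Natan--Lawrence formula, use $C_n$-equivalence to kill everything below the top degree, and isolate $a_{2m+2}$ through the Euler-degree-zero (Alexander/MMR) part. The ``isolation claim'' you flag as the main obstacle is exactly what the paper establishes, and it is easier than you make it sound: by the bigrading bookkeeping of Proposition \ref{prop:degree}, pairing a bigrading-$(e,k)$ diagram against $j$ struts vanishes unless $k=2j$ and otherwise lands in $\mathcal{A}(\emptyset)_{e+k/2}$, so a diagram of total degree $e+k=2m+2$ contributes to the degree-$(m+1)$ part of the LMO invariant only when $e=0$, $k=2m+2$; thus only the Euler-degree-zero part survives at top degree, and by Proposition \ref{proposition:MMR} its difference for two $C_{2m+2}$-equivalent knots is $-\tfrac{1}{2}\bigl(a_{2m+2}(K)-a_{2m+2}(K')\bigr)$ times the $(2m+2)$-legged wheel. (One also needs the paper's Lemma \ref{lemma:KontCn}: passing from $Z^{\sigma}$ to $Z^{\sf Wheel}\sqcup\Omega_q$ does not disturb agreement in the relevant degrees, because $\partial_{\Omega^{-1}}$ and $\sqcup\,\Omega_q$ only raise degree.) Your parity argument for (ii) is likewise the paper's.

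The genuine gap is the nonvanishing statement you fold into the isolation claim. Unlike degrees $2$ and $3$, for general $m$ the degree-$(m+1)$ part of $\mathcal{A}^{red}$ is not one-dimensional, so there is no scalar ``$\lambda_{m+1}$'' and no ``coefficient of $q^{2m+1}/p^{2m+1}$'' to extract; what your argument actually produces is the identity
\[
-\frac{a_{2m+2}(K)-a_{2m+2}(K')}{2(m+1)!}\Bigl(-\frac{q}{2p}\Bigr)^{m+1}\bigl\langle w_{2m+2},\,\text{strut}^{\,m+1}\bigr\rangle=0
\]
in $\mathcal{A}^{red}_{m+1}$, where $w_{2m+2}$ is the $(2m+2)$-legged wheel. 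To conclude $a_{2m+2}(K)=a_{2m+2}(K')$ you must show that $\bigl\langle w_{2m+2},\,\text{strut}^{\,m+1}\bigr\rangle\neq 0$ in the quotient $\mathcal{A}^{red}$. This is not automatic --- $\mathcal{A}(\emptyset)$ in high degree is poorly understood and one is working modulo the ideal generated by the theta graph --- and the paper devotes Lemma \ref{lemma:nonvanish} to it, evaluating the $\mathfrak{sl}_2$ weight system to get $2(2h)^{m+1}(2m+3)!\neq 0$ on this element (a weight system that kills the theta graph's ideal would not do; one checks the value survives the reduction). Without some such computation the displayed identity is consistent with the pairing being zero and yields no information, so this step must be supplied before either part of the theorem is proved.
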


We say that $K$ and $K'$ are \emph{odd $C_{n+1}$}-equivalent if $v(K)=v(K')$ for all \emph{odd degree} canonical finite type invariant $v$ of degree $\leq n$. We call a knot is \emph{odd $C_{n+1}$-trivial} if it is odd $C_{n+1}$-equivalent to the unknot.

Let $(Z^{\sigma}(K) \sqcup \Omega^{-1})_{e,k}$ denotes the bigrading $(e,k)$ of the Kontsevich invariant of $K$, normalized so that the unknot takes value $1$. Let $K_{e,k}$ denotes the kernel of the Aarhus integration (diagram pairing) $\langle \ast , \strutd^{\frac{k}{2}}\rangle: \mathcal{B}_{e,k} \rightarrow \mathcal{A}(\emptyset)_{e + \frac{k}{2}}$ (See Section \ref{section:LMO}).

\begin{theorem}
\label{theorem:LMOhigh2}
Let $K$ and $K'$ be a knot in $S^{3}$ and $r \in \Q \setminus\{0\}$.
\begin{enumerate}
\item[(i)] Assume that $K$ and $K'$ are $C_{2m+1}$-equivalent.
If $S^{3}(K,r) \cong S^{3}(K',r)$ and $a_{2m+2}(K) = a_{2m+2}(K')$, then $(Z^{\sigma}(K) \sqcup \Omega^{-1})_{1,2m}- (Z^{\sigma}(K) \sqcup \Omega^{-1})_{1,2m} \in K_{1,2m}$.
\item[(ii)]  Assume that $K$ is odd $C_{4m+1}$-trivial. 
If $S^{3}(K,r) \cong - S^{3}(K,-r)$ then $(Z^{\sigma}(K) \sqcup \Omega^{-1})_{1,4m} \in K_{1,4m}$. 
\item[(iii)] Assume that $K$ is odd $C_{4m+3}$-trivial.
If $S^{3}(K,r) \cong \pm S^{3}(K,-r)$ then $(Z^{\sigma}(K) \sqcup \Omega^{-1})_{1,4m+2} \in K_{1,4m+2}$. 
\end{enumerate}
\end{theorem}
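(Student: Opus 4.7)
The approach for all three parts is uniform: apply the Bar-Natan--Lawrence rational surgery formula \cite{bl}, which expresses $\lambda_{N}(S^{3}(K,p/q))$ as a sum of Aarhus pairings of the bigraded pieces $(Z^{\sigma}(K)\sqcup \Omega^{-1})_{e,k}$ with $e+k/2=N$ against weighted strut powers, plus a lens-space term $\lambda_{N}(L(p,q))$; then use the $C$-equivalence or odd-triviality hypotheses to eliminate all but one bigraded contribution in the relevant comparison.

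For part (i), take $N=m+1$. In the paper's bigrading convention a connected piece of $\mathcal{B}_{e,k}$ has Vassiliev degree $e+k$, so the bigradings at Aarhus degree $m+1$ carry Vassiliev degrees $m+1, m+2, \ldots, 2m+1, 2m+2$. The $C_{2m+1}$-equivalence of $K$ and $K'$ forces $Z^{\sigma}(K)=Z^{\sigma}(K')$ through Vassiliev degree $2m$, killing every bigrading with $e\ge 2$ at Aarhus degree $m+1$; the Melvin--Morton--Rozansky theorem identifies the wheel-type piece $(Z^{\sigma}(K))_{0,2m+2}$ with a function of the Conway coefficients $a_{2j}(K)$ for $j\le m+1$, so the $C_{2m+1}$-equivalence (supplying equality for $j\le m$) together with the additional hypothesis $a_{2m+2}(K)=a_{2m+2}(K')$ forces this piece to agree as well. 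The only remaining bigraded contribution in the difference of the two surgery formulae is the $(1,2m)$ piece, and $S^{3}(K,r)\cong S^{3}(K',r)$ therefore forces the Aarhus pairing of $(Z^{\sigma}(K)\sqcup \Omega^{-1})_{1,2m}-(Z^{\sigma}(K')\sqcup \Omega^{-1})_{1,2m}$ against the weighted strut $\strutd^{\,m}$ to vanish, giving the claimed membership in $K_{1,2m}$.

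For parts (ii) and (iii), the hypothesis $S^{3}(K,r)\cong\pm S^{3}(K,-r)$ combined with $\lambda_{N}(-M)=(-1)^{N}\lambda_{N}(M)$ and $\lambda_{N}(L(p,-q))=(-1)^{N}\lambda_{N}(L(p,q))$ makes the lens term cancel from the appropriate difference and yields a parity condition forcing the coefficients of monomials of prescribed $q$-parity in the surgery formula to vanish. Each such coefficient is a sum of Aarhus pairings from bigradings contributing to a common $(q/p)^{k/2}$-monomial, possibly contaminated by lower-degree canonical invariants $v_{i}(K)$ coming from the interaction of $\Omega^{-1}$ with the lower-Vassiliev-degree pieces of $Z^{\sigma}(K)$; odd $C_{4m+1}$-triviality (respectively odd $C_{4m+3}$-triviality) of $K$ kills precisely the odd-degree canonical invariants through Vassiliev degree $4m$ (resp.\ $4m+2$), removing these contaminants and leaving only the contribution of bigrading $(1,4m)$ (resp.\ $(1,4m+2)$). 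The vanishing of that designated coefficient is therefore equivalent to the asserted kernel condition.

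\emph{Main obstacle.} The principal technical step is the combinatorial bookkeeping of which canonical invariants contaminate each monomial coefficient of the surgery formula for $\lambda_{N}$, and verifying that odd $C$-triviality eliminates precisely those invariants so that the residual equation isolates the single bigrading $(1,4m)$ or $(1,4m+2)$. One must also carefully match the paper's bigrading conventions for $\mathcal{B}_{e,k}$ against the standard Vassiliev degree filtration used in $C_{n+1}$-equivalence, and track the signs arising from $q\leftrightarrow -q$ substitution and orientation reversal so that the lens-term cancellation and the $(q/p)^{k/2}$-parity isolation work simultaneously.
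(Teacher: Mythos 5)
Your proposal is correct and follows essentially the same route as the paper: decompose the Bar-Natan--Lawrence surgery formula by bigrading, use $C_{2m+1}$-equivalence together with Melvin--Morton--Rozansky and the $a_{2m+2}$ hypothesis to kill everything except the $(1,2m)$ piece in (i), and use the $q\mapsto -q$ parity argument plus odd $C$-triviality to isolate the $(1,4m)$ (resp.\ $(1,4m+2)$) piece in (ii) and (iii). The ``contamination'' bookkeeping you flag as the main obstacle is exactly what the paper's Lemma \ref{lemma:KontCn} (comparing $Z^{\sf Wheel}(K)\sqcup\Omega_q$ with $Z^{\sigma}(K)\sqcup\Omega^{-1}$ under the equivalence hypotheses) is designed to handle.
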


As a corollary, we prove a vanishing of certain finite type invariants that come from colored Jones polynomial (Quantum $\mathfrak{sl}_2$ invariant). Let $V_{n}(K;t)$ be the $n$-colored Jones polynomial, normalized $V_{n}(\textsf{Unknot};t) = 1$. The colored Jones polynomials have the following expansion called the \emph{loop expansion}, or \emph{Melvin-Morton expansion} \cite{mm}.
\[ V_{n}(K;e^{h}) = \sum_{e\geq 0}  \Bigl(\sum_{k\geq 0} j_{e,k}(K) (nh)^{k}\Bigr) h^{e}. \] 
Here the coefficient $j_{e,k}(K) \in \Q$ is a canonical finite type invariant of degree $e+k$.

\begin{corollary}
\label{corollary:MM}
Let $K$ and $K'$ be a knot in $S^{3}$ and $r \in \Q \setminus\{0\}$.
\begin{enumerate}
\item[(i)] Assume that $K$ and $K'$ are $C_{2m+1}$-equivalent.
If $S^{3}(K,r) \cong S^{3}(K',r)$ and $a_{2m+2}(K) = a_{2m+2}(K')$, then $j_{1,2m}(K)=j_{1,2m}(K')$.
\item[(iii)]  Assume that $K$ is odd $C_{4m+1}$-trivial.
If $S^{3}(K,r) \cong - S^{3}(K,-r)$ then $j_{1,4m}(K)=0$. 
\item[(ii)] Assume that $K$ is odd $C_{4m+3}$-trivial.
If $S^{3}(K,r) \cong \pm S^{3}(K,-r)$ then $j_{1,4m+2}(K)=0$.
\end{enumerate}
\end{corollary}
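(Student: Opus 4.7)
The plan is to reduce each of (i), (ii), (iii) to the corresponding part of Theorem \ref{theorem:LMOhigh2} by producing, for each $k$, a linear functional $\phi_k\colon \mathcal{B}_{1,k}\to \Q$ that (a) evaluates on $(Z^{\sigma}(K)\sqcup \Omega^{-1})_{1,k}$ to $j_{1,k}(K)$ up to a fixed nonzero constant, and (b) factors through the Aarhus diagram pairing $\langle \cdot,\strutd^{k/2}\rangle\colon \mathcal{B}_{1,k}\to \mathcal{A}(\emptyset)_{1+k/2}$, so that $K_{1,k}\subset \ker \phi_k$. Granted this, each part becomes immediate: in (i) one applies $\phi_{2m}$ to the element $(Z^{\sigma}(K)\sqcup \Omega^{-1})_{1,2m}-(Z^{\sigma}(K')\sqcup \Omega^{-1})_{1,2m}\in K_{1,2m}$ produced by Theorem \ref{theorem:LMOhigh2}(i) to obtain $j_{1,2m}(K)-j_{1,2m}(K')=0$, and (ii), (iii) follow identically with the single element $(Z^{\sigma}(K)\sqcup \Omega^{-1})_{1,4m}$ or $(Z^{\sigma}(K)\sqcup \Omega^{-1})_{1,4m+2}$ in place of the difference.

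The functional $\phi_k$ is constructed from the $\mathfrak{sl}_2$-weight system $W_{\mathfrak{sl}_2,V_n}$ attached to the irreducible representation $V_n$. The Melvin--Morton expansion
\[ V_{n}(K;e^{h}) \;=\; W_{\mathfrak{sl}_2,V_n}(Z(K)) \;=\; \sum_{e,k\ge 0} j_{e,k}(K)\,n^{k}h^{e+k}, \]
together with the wheeling isomorphism $\mathcal{B}\cong \mathcal{A}(\bigcirc)$ and the normalization $\sqcup \Omega^{-1}$, matches the loop--leg bigrading on $\mathcal{B}$ with the $(h,n)$-bigrading on the Melvin--Morton side: the $(e,k)$-part contributes exactly to the coefficient of $n^{k}h^{e+k}$. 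The crucial feature is that under $W_{\mathfrak{sl}_2,V_n}$ each univalent leg of a Jacobi diagram in $\mathcal{B}$ contributes a factor of $n$ coming from the action of the quadratic Casimir on $V_n$; diagrammatically, pulling out these $n$-factors is equivalent to first closing up the $k$ legs into $k/2$ chords, i.e.\ applying $\langle \cdot,\strutd^{k/2}\rangle$, and then evaluating the resulting closed Jacobi diagram by $W_{\mathfrak{sl}_2,V_n}$. This gives the identity
\[ j_{1,k}(K) \;=\; c_{k}\,W_{\mathfrak{sl}_2,V_n}\!\bigl(\bigl\langle (Z^{\sigma}(K)\sqcup \Omega^{-1})_{1,k},\,\strutd^{k/2}\bigr\rangle\bigr) \]
for an explicit nonzero rational constant $c_k$, and the functional $\phi_k := c_k\,W_{\mathfrak{sl}_2,V_n}\circ \langle \cdot,\strutd^{k/2}\rangle$ manifestly vanishes on $K_{1,k}$.

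The main obstacle is cleanly establishing the factorization through the strut pairing (rather than the reduction, which is automatic once the factorization is in hand). Fortunately, in the one-loop sector the connected diagrams in $\mathcal{B}_{1,k}$ reduce via the IHX/antisymmetry relations to $k$-legged wheels, and the claim becomes a direct computation of $W_{\mathfrak{sl}_2,V_n}$ on such a wheel compared with the value obtained by first pairing its legs into chords. This is precisely the calculation appearing in Bar-Natan--Garoufalidis's proof of the Melvin--Morton conjecture and implicit in Bar-Natan--Lawrence's rational surgery formula (where $\Omega^{-1}$ is already the natural normalization), so the remaining work is the bookkeeping needed to pin down the constant $c_k$ and to verify that no non-wheel contributions survive the projection to $\mathcal{A}(\emptyset)_{1+k/2}$ modulo the kernel of $W_{\mathfrak{sl}_2,V_n}$ relevant to the $n^k$-coefficient.
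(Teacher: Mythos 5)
Your proposal is correct and follows the paper's route essentially verbatim: the functional $\phi_k$ you describe is exactly the content of the paper's Lemma \ref{lemma:nonvanish2}, which establishes $W_{\mathfrak{sl}_2}\bigl(\bigl\langle (Z^{\sigma}(K)\sqcup\Omega^{-1})_{1,2m},\strutd^{m}\bigr\rangle\bigr)=2^{m}h^{m+1}(2m+1)!\,j_{1,2m}(K)$, after which each part of the corollary is read off from Theorem \ref{theorem:LMOhigh2} precisely as you indicate. The ``bookkeeping'' you defer is carried out there by reducing the one-loop part to a multiple of $\strutd^{m}$ via the $n$-independent $\mathfrak{sl}_2$ relations, matching the $n^{2m}$-coefficient in the Melvin--Morton expansion to get $j_{1,2m}(K)=c_{1,2m}(K)/2^{m}$, and using $W_{\mathfrak{sl}_2}(\langle\strutd^{m},\strutd^{m}\rangle)=(2m+1)!\neq 0$ to see the constant is nonzero.
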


For a canonical finite type invariant $v$ of degree $n$, $v(\mbox{mirror of }K) = (-1)^{n}v(K)$. Thus for an amphicheiral knot $K$, $v(K)=0$ for all  \emph{odd} degree  canonical finite type invariant $v$. It is conjectured that converse is true (this is related to a more familiar conjecture that finite type invariants do not detect the orientation of knots \cite[Problem 1.89]{kir}). Corollary \ref{corollary:LMO2} (i), Corollary \ref{corollary:LMO3} (ii), Corollary \ref{corollary:MM} (ii),(iii) says that if $S^{3}(K,r) \cong -S^{3}(K,-r)$ then various canonical odd degree finite type invariants of $K$ vanish. Thus, they bring a supporting evidence for an affirmative answer to the following question.
\[ \mbox{\emph{If} } S^{3}(K,r) \cong -S^{3}(K,-r) \mbox{\emph{ for some }} r \neq 0,\infty, \mbox{\emph{ then is }} K \mbox{\emph{ amphicheiral?}}\]

\section*{Acknowledgements}
The author would like to thank K. Ichihara for stimulating talk and discussion which inspires the author to work on this subject, and to thank to Z. Wu for pointing out inaccuracy of reference in the first version of the paper. This work was partially supported by JSPS KAKENHI 15K17540,16H02145.

\section{LMO invariants and rational surgery formula}
\label{section:LMO}

In this section we briefly review the basics of Kontsevich and LMO invariants. We use Aarhus integral construction of the LMO invariant developed \cite{bgrt1,bgrt2,bgrt3} and a rational surgery formula of the LMO invariant due to Bar-Natan and Lawrence \cite{bl}. For basics of the Kontsevich and the LMO invariants we refer \cite{oht}.

\subsection{Open Jacobi diagrams}

An \emph{(open) Jacobi diagram} or \emph{(vertex-oriented) uni-trivalent graph} is a graph $D$ whose vertex is either univalent or trivalent, such that at each trivalent vertex $v$ a cyclic order on three edges around $v$ is equipped. 
The \emph{degree} of $D$ is the half of the number of vertices.
We will often call a univalent vertex a \emph{leg}, and denote the number of the legs of a Jacobi diagram $D$ by $k(D)$.
For a Jacobi diagram $D$, let $e(D)=-\chi(D)$ be the minus of the euler characteristic of $D$. We call $e(D)$ the \emph{euler degree} of $D$. 
Then $\deg(D)=e(D)+k(D)$.

Let $\mathcal{B}$ (resp. $\mathcal{A}(\emptyset)$) be the vector space over $\C$ spanned by Jacobi diagrams (resp. Jacobi diagrams without univalent vertex), modulo the AS and IHX relations given in Figure \ref{fig:IHX}. 

\begin{figure}[htbp]
\includegraphics*[width=100mm]{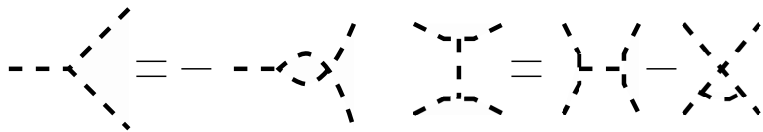}
\caption{The AS and IHX relation: we understand that at each trivalent vertex, cyclic order is defined by counter-closkcwise direction.} 
\label{fig:IHX}
\end{figure} 

By taking the disjoint union $\sqcup$ as the product, both $\mathcal{B}$ and $\mathcal{A}(\emptyset)$  have the structure of graded algebras. Since the IHX and the AS relations and the disjoint union product respect both $k(D)$ and $e(D)$, we view $\mathcal{B}$ as a bi-graded algebra. 
For $X \in \mathcal{B}$ we denote by $X_{e,k}$ the part of $X$ whose bigrading is $(e,k)$.
Strictly speaking, we will use the completion of $\mathcal{B}$ and $\mathcal{A}(\emptyset)$ with respect to degrees which we denote by the same symbol $\mathcal{B}$ and $\mathcal{A}(\emptyset)$ by abuse of notations.

Let $\exp_{\sqcup}:\mathcal{B} \rightarrow \mathcal{B}$ (or, $\mathcal{A}(\emptyset) \rightarrow \mathcal{A}(\emptyset)$) be the exponential map with respect to $\sqcup$ product operation, defined by
\[ \exp_{\sqcup}(D) = 1 + D + \frac{1}{2}D \sqcup D + \cdots = \sum_{n=0}^{\infty} \frac{1}{n!}\underbrace{(D \sqcup\cdots \sqcup D)}_{n}. \] 
We will simply denote $\underbrace{(D \sqcup\cdots \sqcup D)}_{n}$ by $D^{n}$. 

For a Jacobi diagram $C$, let $\partial_{C}:\mathcal{B} \rightarrow \mathcal{B}$ be the differential operator defined by
\[ \partial_{C} (D) =\begin{cases} 0 & k(C)>k(D) \\
\sum(\mbox{glue} \textit{ all } \mbox{the legs of } \Omega \mbox{ to some legs of }D) & k(C)\leq k(D)
\end{cases}
\]
In a similar manner, we define the pairing $\langle C,D\rangle \in\mathcal{A}(\emptyset)$ of $C,D \in \mathcal{B}$ by 
\[ 
\langle C,D \rangle = 
\begin{cases}
0 & k(C) \neq k(D)\\
\sum \mbox{(glue the legs of } C \mbox{ to the legs of } D) & k(C)=k(D)
\end{cases}
\]
Thus $\partial_{C}(D)=\langle C,D\rangle$ if $k(C)=k(D)$.
In both cases, the summation runs all the possible ways to gluing \emph{all} the legs of $C$ to \emph{some} legs of $D$. We denote this summation by using box, as Figure \ref{fig:pairing}.
It is known that $\partial_{C \sqcup C'} = \partial_{C'}\circ\partial_{C}$. Thus if $C \in \mathcal{B}$ is invertible (with respect to $\sqcup$ product) then $\partial_{C}$ is invertible with $\partial_{C}^{-1}=\partial_{C^{-1}}$ (see \cite{bgrt2,blt,bl} for details).

\begin{figure}[htbp]
\includegraphics*[width=75mm]{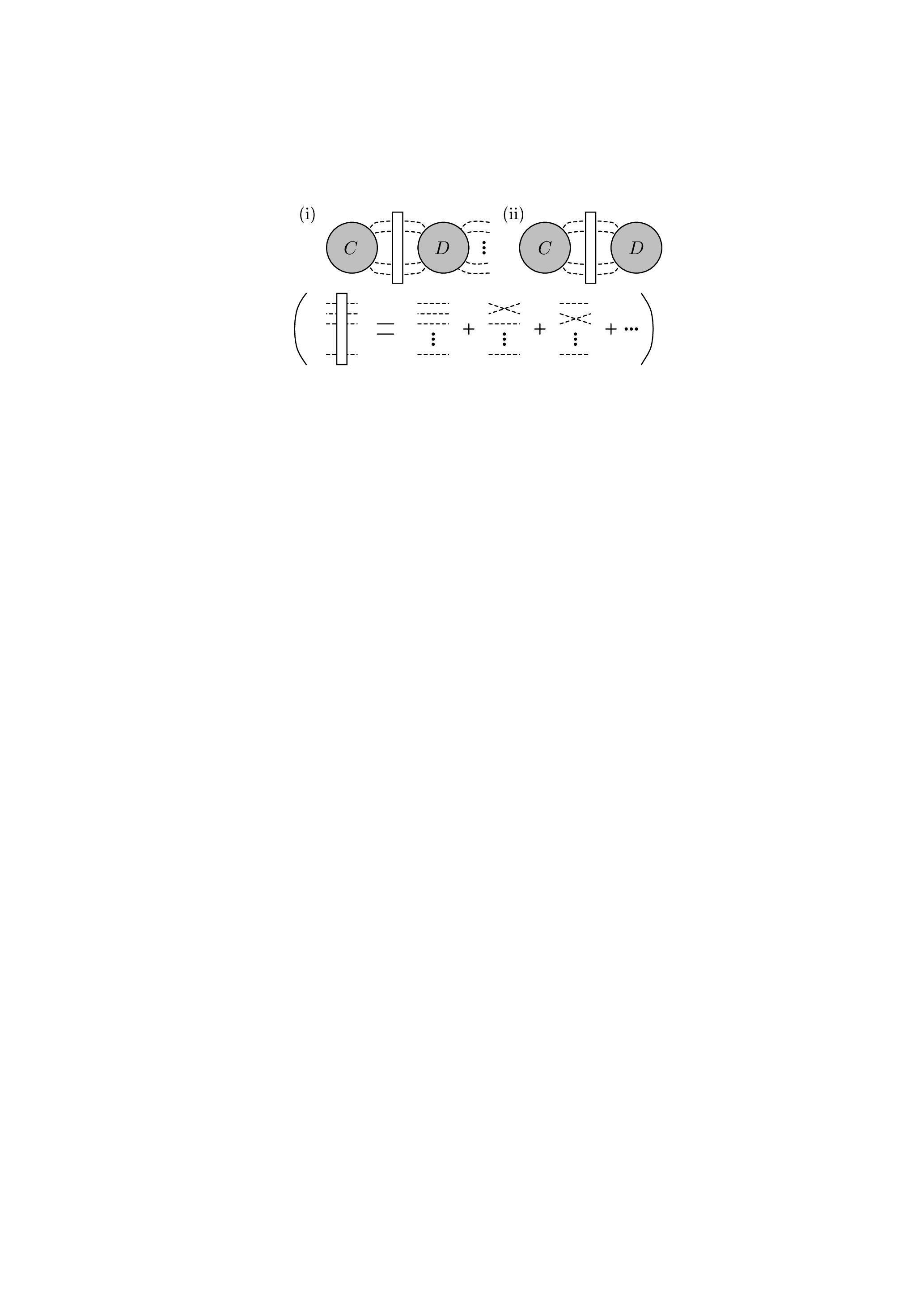}
\caption{(i) Differential operator $\partial_{C}(D)$ (ii) Pairing $\langle C,D \rangle$.} 
\label{fig:pairing}
\end{figure}

Let $b_{2i}$ be the modified Bernoulli numbers, defined by 
\begin{equation}
\label{eqn:modBer} \frac{1}{2}\log \frac{\sinh(x\slash2)}{x \slash 2} = \sum_{i=0}^{\infty} b_{2i} x^{2i} = 1+ \frac{1}{48}x^{2}-\frac{1}{5760}x^{4}+ \frac{1}{362880}x^{6}+ \cdots. 
\end{equation}
For $q \in \Z\setminus\{0\}$, let
\[ \Omega_{q} = \exp_{\sqcup} \Bigl(\sum_{n=1}^{\infty} \frac{b_{2n}}{q^{2n}}
\overbrace{\raisebox{-4.5mm}{\includegraphics*[width=10mm]{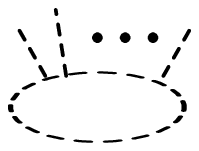}}}^{2n} \;\Bigr) = 1 + \frac{1}{48q^{2}}\Donetwod - \frac{1}{5760q^{4}}\Donefourd + \frac{1}{4608q^{4}}\Donetwod \Donetwod +\cdots. \]

The element $\Omega=\Omega_{1}$ is called the \emph{wheel element}.

\subsection{Wheeled Kontsevich invariant}

The \emph{Kontsevich invariant} $Z(K)$ is an invariant of a framed knot, which takes value in $\mathcal{A}(S^{1})$, the space of Jacobi diagram over $S^{1}$ \cite{ko1,bar}. Throughout the paper, we will always assume that the knot $K$ is zero-framed.
The target space $\mathcal{A}(S^{1})$ is isomorphic to $\mathcal{B}$ as a graded vector space, by Poincar\'e-Birkoff-Witt isomorphism $\chi: \mathcal{B} \rightarrow \mathcal{A}(S^{1})$. Let $\sigma:\mathcal{A}(S^{1}) \rightarrow \mathcal{B}$ be the inverse of $\chi$. In the rest of the paper, we will always view the Kontsevich invariant takes value in $\mathcal{B}$, by defining
\[ Z^{\sigma}(K) = \sigma(Z(K)) \in \mathcal{B} \]
We will denote by $Z^{\sigma}(K)_{e,k}$ the bigrading $(e,k)$ part of the Kontsevich invariant. See \cite{gr} for a topological meaning of this bigrading. 

Let $V_n$ be the vector space spanned by finite type invariants of degree $\leq n$. The Kontsevich invariant gives a map $\mathcal{Z}:(\mathcal{B}_{\deg =n})^{*} \rightarrow V_{n}$, by $\mathcal{Z}(w)(K)= w(Z^{\sigma}(K)_{\deg=n})$. Here $w: \mathcal{B}_{\deg=n} \rightarrow \C$ is an element of $(\mathcal{B}_{\deg =n})^{*}$, the dual space of degree $n$ part of $\mathcal{B}$ and $Z^{\sigma}(K)_{\deg=n} \in \mathcal{B}_{\deg=n}$ denotes the degree $n$ part of $Z^{\sigma}(K)$.
On the other hand, there is a map called \emph{symbol} $\mathsf{Symb}: V_{n} \rightarrow (\mathcal{B}_{\deg = n})^{*}$ (see \cite{bar,bg} for definition). A finite type invariant $v \in V_n$ is called
\begin{itemize}
\item[--] \emph{canonical}, if $v= \mathcal{Z}(\mathsf{Symb}(v))$.
\item[--] \emph{primitive}, if $v(K\# K')=v(K)+v(K')$, which is equivalent to saying that $v$ lies in the image of the primitive subspace of $\mathcal{B}$.
\end{itemize}

The \emph{wheeled Kontsevich invariant} $Z^{\sf Wheel}(K) \in \mathcal{B}$ is a version of the Kontsevich invariant defined as follows.

Let $\partial_{\Omega}= 1 + \frac{1}{48}\partial_{\includegraphics*[width=3mm]{D_12.eps}}+\cdots$ be the differential operator defined by the wheel element $\Omega$. The \emph{wheeling map} $\Upsilon = \chi \circ \partial_{\Omega}:\mathcal{B} \rightarrow \mathcal{A}(S^{1})$ is the composite of $\partial_{\Omega}$
and the Poincar\'e-Birkoff-Witt isomorphism $\chi$.
The wheeling map $\Upsilon$ gives an isomorphism of \emph{algebra} \cite[Wheeling theorem]{blt}, whereas Poincar\'e-Birkoff-Witt isomorphism $\chi$ only gives an isomorphism of vector space.

The wheeled Kontsevich invariant is the image of the Kontsevich invariant under the inverse of the wheeling map $\Upsilon$: 
\[ Z^{\sf Wheel}(K) = \Upsilon^{-1}(Z(K)) = (\partial_{\Omega})^{-1}\circ\sigma(Z(K)) = \partial_{\Omega^{-1}} Z^{\sigma}(K). \]

The wheel element $\Omega$ is equal to the Kontsevich invariant of the unknot \cite[Wheel theorem]{blt}: $Z^{\sigma}({\sf Unknot})=\Omega$. Therefore instead of $Z^{\sigma}$ or $Z^{\sf Wheel}$, it is often useful to use $Z^{\sigma}(K) \sqcup \Omega^{-1}$ since $Z^{\sigma}({\sf Unknot}) \sqcup \Omega^{-1} = 1$.

The Kontsevich invariant is group-like, so $Z^{\sigma}(K) = \exp_{\sqcup}(z^{\sigma}(K))$, where $z^{\sigma}(K)$ denotes the primitive part of $Z^{\sigma}(K)$. We express low degree part of the primitive Kontsevich invariant as
\begin{eqnarray*}
Z^{\sigma}(K) \sqcup \Omega^{-1}&= &\exp_{\sqcup}\Bigl( v_2(K)\Donetwod + v_3(K)\Dtwotwod+ v_{4}(K)\Donefourd + w_4(K) \Dthreetwod \\
& & \hspace{1.8cm}+v_5(K)\Dtwofourd  +v_6(K)\Donesixd+(\mbox{higher degree parts}) \Bigr).
\end{eqnarray*}
Here $v_{2}(K),v_3(K),v_4(K),w_4(K),v_5(K),v_6(K)$ are canonical, primitive finite type invariants of degree $2,3,4,4,5,6$, respectively.

Thus the bigrading $(e,k)$ part of $Z^{\sigma}(K)$ with $e+\frac{k}{2} \leq 3$ are explicitly written by
\begin{eqnarray*}
Z^{\sigma}(K)&\!\!=\!\!&1 +\bigl(v_2(K) + b_2\bigr) \Donetwod + v_3(K)\Dtwotwod+ \frac{1}{2}\bigl(v_2(K) +b_{2}\bigr)^{2} \Donetwod \Donetwod +(v_{4}(K)+b_{4}) \Donefourd\\ 
& &+ w_4(K) \Dthreetwod + v_3(K)(v_2(K)+b_2)\Donetwod \Dtwotwod + v_5(K)  \Dtwofourd + \frac{1}{6}\bigl(v_2(K) +b_{2}\bigr)^{3}\Donetwod \Donetwod\Donetwod \\
& &+\bigl( v_2(K)+b_2\bigr)\bigl( v_4(K)+b_4 \bigr)\Donetwod \Donefourd + \bigl(v_6(K) +b_{6}\bigr)\Donesixd+(\mbox{higher degree parts}).
\end{eqnarray*}
Here $b_{2i}$ denotes the modified Bernoulli numbers given by (\ref{eqn:modBer}).

Except $v_{5}(K)$, these finite type invariants are written by using Conway polynomial and the Jones polynomials.
Let $a_{2i}(K)$ be the coefficient of $z^{2i}$ in the Conway polynomial $\nabla_{K}(z)$ of $K$, and let $j_{n}(K)$ is the coefficient of $h^{n}$ in the Jones polynomial $V_{K}(e^{h})$ of $K$, putting the variable as $t=e^{h}$. Then we have the following (See Section \ref{section:sl2computation} for proof).

\begin{lemma}
\label{lemma:AJ}
\begin{enumerate}
\item $v_{2}(K)=-\frac{1}{2}a_{2}(K)$.
\item $v_{3}(K)=-\frac{1}{24}j_{3}(K)$.
\item $v_{4}(K)=-\frac{1}{2}a_{4}(K)-\frac{1}{24}a_{2}(K)+\frac{1}{4}a_{2}(K)^{2}$.
\item $w_{4}(K)=\frac{1}{96}j_{4}(K) + \frac{3}{32}a_{4}(K) - \frac{9}{2}a_{2}(K)^{2}$.
\item $v_{6}(K)=-\frac{1}{2}a_{6}(K)-\frac{1}{12}a_{4}(K)-\frac{1}{720}a_{2}(K)+\frac{1}{24}a_{2}(K)^{2}+\frac{1}{2}a_{2}(K)a_{4}(K)-\frac{1}{6}a_{2}(K)^{3}$.
\end{enumerate}
\end{lemma}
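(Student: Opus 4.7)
The plan is to observe that each of $v_2, v_3, v_4, w_4, v_6$ is, by its very definition, a canonical primitive finite type invariant --- namely, the coefficient of a specific connected primitive open Jacobi diagram in $\log(Z^{\sigma}(K)\sqcup\Omega^{-1})$. Such a canonical finite type invariant is determined by its symbol. The Conway and Jones coefficients $a_{2i}(K)$ and $j_n(K)$ are also canonical finite type invariants. Hence each identity in the lemma reduces to checking that the symbols of the two sides agree as functionals on the finite-dimensional space of primitive open Jacobi diagrams of total degree $\leq 6$, namely $\Donetwod,\ \Dtwotwod,\ \Donefourd,\ \Dthreetwod,\ \Dtwofourd$ and $\Donesixd$.

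First I would compute the values of the symbols of $a_{2i}$ and $j_n$ on each of these six diagrams. For the Conway side, the symbol of $a_{2i}$ factors through the $\mathfrak{gl}(1|1)$ weight system, which is known to vanish on every connected trivalent diagram that is not a wheel, and to take explicit values on the wheels $\Donetwod,\Donefourd,\Donesixd$. For the Jones side, I would apply the $\mathfrak{sl}_2$ weight system with the standard $2$-dimensional representation: repeatedly use the STU relation to reduce each diagram to a sum of chord diagrams, then compute the resulting trace in $U(\mathfrak{sl}_2)$ using the Casimir element. This yields closed rational values of $\mathsf{Symb}(j_n)$ on all six primitives.

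Next comes the linear algebra. On the basis $\{\Donetwod, \Dtwotwod, \Donefourd, \Dthreetwod, \Dtwofourd, \Donesixd\}$, the symbols of $v_2, v_3, v_4, w_4, v_5, v_6$ are read off directly from the displayed expansion of $Z^{\sigma}(K)\sqcup\Omega^{-1}$ preceding the lemma: they are dual to the corresponding primitive diagram (with the $\Omega^{-1}$ correction accounted for by the shifts involving the modified Bernoulli numbers $b_{2i}$). Solving the resulting linear system expresses each of $v_2,v_3,v_4,w_4,v_6$ as a combination of $a_{2i}$, $j_n$, and products thereof. The nonlinear terms $a_2^{2}$ in the formula for $v_4$, and $a_2^{3},\ a_2 a_4$ in the formula for $v_6$, arise naturally: $a_{2i}$ is not primitive, since $\nabla_K$ is multiplicative under connected sum, so the primitive projection is obtained by passing to $\log \nabla_K$, and expanding the $\log$ produces exactly the stated polynomial corrections.

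The principal technical obstacle is the explicit evaluation of the $\mathfrak{sl}_2$ weight system on the higher-degree primitives $\Donefourd,\ \Dthreetwod,\ \Dtwofourd$ and $\Donesixd$. The STU reduction produces many chord diagrams, and one must carefully track signs, symmetry factors coming from automorphisms of the diagrams, and the framing correction, as well as the bookkeeping that extracts the correct primitive projection from the wheel-part of $Z^{\sigma}(K)$ (encoded by $\Omega^{-1}$ and the $b_{2i}$). This is where genuine work is required, and these explicit diagram evaluations are presumably what Section \ref{section:sl2computation} carries out in full detail.
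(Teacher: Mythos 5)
Your plan for items (2) and (4) is essentially the paper's: one applies the identity $W_{(\mathfrak{sl}_2,V_2)}(Z^{\sigma}(K)\sqcup\Omega^{-1})=V_2(K;e^{-h})$ degree by degree, and the required $\mathfrak{sl}_2$ weight-system values on the degree~$3$ and degree~$4$ diagrams are exactly the computations you anticipate. The gap is in your reduction of all five identities to ``checking that the symbols of the two sides agree.'' A symbol determines a \emph{canonical} invariant, but determines a general degree-$n$ finite type invariant only up to invariants of degree $\le n-1$. The right-hand sides of (3) and (5) contain terms of strictly lower degree than the invariant being identified: $-\frac{1}{24}a_2(K)$ in (3), and $-\frac{1}{12}a_4(K)-\frac{1}{720}a_2(K)+\frac{1}{24}a_2(K)^2$ in (5). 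These terms are invisible to any symbol computation (their degree-$4$, resp.\ degree-$6$, symbols vanish), and precisely because of them the right-hand sides are \emph{not} canonical as degree-$4$, resp.\ degree-$6$, invariants, so the principle ``canonical invariants with equal symbols are equal'' does not apply to them. Concretely, pure symbol matching would output $v_4=-\frac12 a_4+\frac14 a_2^2$, which contradicts (3) for any knot with $a_2\neq 0$.

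The missing ingredient is what the paper actually uses for (1), (3), (5): Proposition \ref{proposition:MMR} identifies the \emph{entire} Euler-degree-zero (wheel) part of the Kontsevich invariant, in all degrees at once, with $\exp_{\sqcup}$ of the wheel series whose coefficients are those of $-\frac12\log\Delta_K(e^{x})$. The lower-degree corrections then arise from the change of variables $z=e^{x/2}-e^{-x/2}$, i.e.\ $z^{2}=x^{2}+\frac{1}{12}x^{4}+\frac{1}{360}x^{6}+\cdots$, which converts the Conway expansion $\nabla_K(z)=1+\sum a_{2i}(K)z^{2i}$ into a series in $x$ --- not, as you suggest, from expanding the logarithm, which only accounts for the product terms $a_2^{2}$, $a_2^{3}$, $a_2a_4$. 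With Proposition \ref{proposition:MMR} in hand, items (1), (3), (5) are immediate and no diagram-by-diagram $\mathfrak{gl}(1|1)$ weight-system evaluation is needed; your $\mathfrak{sl}_2$ computation then finishes (2) and (4) exactly as in the paper.
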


Since the Jones polynomial is an integer coefficient polynomial, $j_{3}(K) \in 6\Z$ so $4v_{3}(K) \in \Z$. The degree three finite type invariant $v_3$ takes value $-\frac{1}{4}$ for a right-handed trefoil.

In general, the euler degree zero part of the Kontsevich invariant is wirtten by the Alexander polynomial, using the following formula (This is a consequence of Melvin-Morton-Rozansky conjecture \cite{bg}).

\begin{proposition}
\label{proposition:MMR}
Let $-\frac{1}{2} \log \Delta_{K}(e^{x})=\sum_{n=0}^{\infty} d_{2n}(K) x^{2n}$ where $\Delta_{K}(t)$ is the Alexander polynomial of $K$, normalized so that $\Delta_{K}(t)=\Delta_{K}(t^{-1})$, $\Delta_{K}(1)=1$. 
Then the euler degree zero part of the Kontsevich invariant is
\[ \exp_{\sqcup}\Bigl(\sum_{k=0}^{\infty} d_{2k}(K) \overbrace{\raisebox{-4.5mm}{\includegraphics*[width=10mm]{D_12m.eps}}}^{2n} \Bigr).\]
In particular, if $a_2(K)=a_{4}(K)=\cdots =a_{2m}(K)=0$ for some $m\geq0$ then $d_{2}(K)=d_4(K)=\cdots = d_{2m}(K)=0$ and $d_{2m+2}(K)=-\frac{1}{2}a_{2m+2}(K)$.
\end{proposition}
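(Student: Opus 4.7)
The plan is to reduce the first assertion to the Melvin--Morton--Rozansky conjecture, now a theorem of Bar-Natan--Garoufalidis \cite{bg}, and then handle the normalization/vanishing claim by a direct power-series comparison between $\Delta_K$ and $\nabla_K$.

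First I would observe that a connected open Jacobi diagram of euler degree zero has first Betti number $b_1 = 1-\chi = 1$, hence consists of a single cycle with forests attached; by repeated AS and IHX moves each such diagram collapses to some wheel $w_{2n}$ ($n \geq 1$). Therefore the euler-degree-zero part of $\mathcal{B}$ is the polynomial algebra $\Q[w_2, w_4, \ldots]$ under $\sqcup$, and it is enough to identify the primitive (i.e. logarithmic) part of the euler-degree-zero component of $Z^{\sigma}(K)$ as $\sum_{k\geq 1} d_{2k}(K)\, w_{2k}$. In the wheeled formulation, this identification is exactly the content of the Melvin--Morton--Rozansky theorem of \cite{bg}: after checking that the sign and grading conventions of Section~\ref{section:LMO} agree with those of \cite{bg} --- in particular, that the overall factor $-\tfrac12$ comes from the standard normalization $d(K;x) = -\tfrac12 \log \Delta_K(e^x)$ and that $\Delta_K$ is normalized by $\Delta_K(1) = 1$ (so $d_0 = 0$) --- the first assertion follows.

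For the second assertion I would use the relation $\nabla_K(t^{1/2}-t^{-1/2}) = \Delta_K(t)$ with $t=e^x$, so that $z = 2\sinh(x/2)$. Expanding gives
\[
\Delta_K(e^x) \;=\; 1 + \sum_{n\geq 1} a_{2n}(K)\bigl(2\sinh(x/2)\bigr)^{2n}.
\]
Since $\bigl(2\sinh(x/2)\bigr)^{2n} = x^{2n} + O(x^{2n+2})$, the hypothesis $a_2(K) = \cdots = a_{2m}(K) = 0$ forces $\Delta_K(e^x) = 1 + a_{2m+2}(K)\, x^{2m+2} + O(x^{2m+4})$. Taking $-\tfrac12 \log$ of both sides and reading off Taylor coefficients yields $d_2(K) = \cdots = d_{2m}(K) = 0$ and $d_{2m+2}(K) = -\tfrac12 a_{2m+2}(K)$, as claimed.

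The main obstacle I expect is purely bookkeeping for part (1): the MMR theorem appears in the literature under several normalizations (hair versus wheel formulations, wheeled versus unwheeled Kontsevich invariant, differing sign conventions for the Alexander polynomial), so one must carefully line up these conventions with those of Section~\ref{section:LMO}. Once that is done, the power-series calculation in part (2) is entirely elementary.
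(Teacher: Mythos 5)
Your proposal is correct and follows essentially the same route as the paper, which gives no proof of this proposition beyond citing the Melvin--Morton--Rozansky theorem of Bar-Natan--Garoufalidis \cite{bg} for the first assertion. Your reduction of the second assertion to the substitution $\Delta_K(e^x)=\nabla_K\bigl(2\sinh(x/2)\bigr)$ together with $\bigl(2\sinh(x/2)\bigr)^{2n}=x^{2n}+O(x^{2n+2})$ is the standard (implicit) computation and is carried out correctly.
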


\subsection{LMO invariant and rational surgery formula}

The LMO invariant $\widehat{Z}^{LMO}(M)$ is an invariant of an oriented closed 3-manifold $M$ that takes value in $\mathcal{A}(\emptyset)$.
Here we restrict our attention to the case $M=S^{3}(K,r)$ for $r \neq 0,\infty$. In particular, we will always assume that $M$ is a rational homology sphere.

To make computation simpler, we will use the following simplification. 
Let $\Aoned$ be the theta-shaped Jacobi diagram which generates the degree one part of $\mathcal{A}(\emptyset)$.

Let $\mathcal{A}^{red}$ be the quotient of $\mathcal{A}(\emptyset)$ by the ideal generated by $\Aoned$, and let $\pi: \mathcal{A}(\emptyset) \to \mathcal{A}^{red}$ be the quotient map. 
We call $\pi(\widehat{Z}^{LMO}(M)) \in \mathcal{A}^{red}$ the \emph{reduced LMO invariant} and denote by $Z^{LMO}(M)$.
By abuse of notation, we will simply refer the reduced LMO invariant simply as the LMO invariant. When we change the orientation, the (reduced) LMO invariant changes as
\[ Z^{LMO}_{n}(-M) = (-1)^{n}Z^{LMO}_n(M) \]
where $Z^{LMO}_{n}(M)$ denotes the degree $n$ part of the LMO invariant.

The low degree part of the (reduced) LMO invariant is written by
\[ Z^{LMO}(M)=1 + \lambda_{2}(M) \Atwod + \lambda_{3}(M) \Athreed +(\mbox{higher degree parts}). \]
where $\lambda_2(M), \lambda_3(M) \in \C$ are finite type invariant of rational homology spheres.
In the rest of arguments, unless otherwise specified, we will always work in $\mathcal{A}^{red}$. For example, we will always view the pairing $\langle D,D' \rangle$ so that it takes value in $\mathcal{A}^{red}$, by composing the quotient map $\pi$.

Using the simplification $\Aoned=0$, the (reduced) LMO invariant of the 3-manifold obtained by rational Dehn surgery along a knot $K$ is given by the following simpler formula. Let \strutd be the strut, the Jacobi diagram homeomorphic to the interval. 

\begin{theorem}[Rational surgery formula \cite{bl}]
\label{theorem:rsurgery}
Let $K$ be a knot in $S^{3}$. Then the (reduced) LMO invariant of $p/q$-surgery along $K$ is given by
\[ Z^{LMO}(S^{3}(K,p/q))= \Bigl\langle Z^{\sf Wheel}(K) \sqcup \Omega_{q}  \pairingcommaa  \exp_{\sqcup}( -\frac{q}{2p}\raisebox{-1mm}{\includegraphics*[width=7mm]{strut.eps}})  \Bigr\rangle. \]
\end{theorem}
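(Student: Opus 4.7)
The plan is to follow Bar-Natan and Lawrence: realize $p/q$-surgery on $K$ as integer surgery on an augmented link via a continued fraction expansion, apply the integer-framed Aarhus integral, and then integrate out the auxiliary components so that the result collapses to the claimed single-variable pairing. Concretely, using a negative continued fraction $p/q = a_1 - 1/(a_2 - 1/(\cdots - 1/a_n))$, I would present $S^3(K, p/q)$ as integer surgery on $L = K \cup U_1 \cup \cdots \cup U_{n-1}$, where $U_1, \ldots, U_{n-1}$ form a chain of meridional unknots and the integer framings are read off from the $a_i$. This is a routine Rolfsen/slam-dunk construction.

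Next, I would invoke the integer Aarhus integral of \cite{bgrt1, bgrt2, bgrt3}: after separating out the strut part of $Z^\sigma(L)$ (which encodes the linking matrix $E$), the reduced LMO invariant is the formal Gaussian pairing of the strutless Kontsevich invariant against $\exp_\sqcup(-\tfrac{1}{2}(E^{-1})_{ij}\, h_i h_j)$ realized diagrammatically by struts between the leg variables $h_i$. Because each $U_i$ is an unknot linked to its neighbors only through linking numbers, the Wheels theorem $Z^\sigma({\sf Unknot}) = \Omega$ makes each auxiliary component contribute a single factor of $\Omega$ to the strutless integrand, while the $K$-component contributes $Z^\sigma(K)$ up to the wheeling that converts it to $Z^{\sf Wheel}(K)$. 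I would then integrate out $h_1, \ldots, h_{n-1}$ one at a time; each step is the diagrammatic shadow of a single slam-dunk on the continued fraction and uses two ingredients: the Fubini-type identity for formal Gaussian integration \cite{bgrt2}, and the rescaling identity that $\Omega_q$ is obtained from $\Omega$ by multiplying each $2n$-legged wheel by $1/q^{2n}$, equivalent to the substitution $b_{2n} \mapsto b_{2n}/q^{2n}$ in \eqref{eqn:modBer}.

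The main obstacle is the bookkeeping at this iterated integration step: verifying that after all $n-1$ integrations, the accumulated wheels on the $K$-leg assemble exactly into the single factor $\Omega_q$, and the Gaussian coefficient collapses exactly to $-q/(2p)$. I expect both statements to reduce to a single generating-function identity for the modified Bernoulli numbers $b_{2n}$, which can alternatively be established by induction on $n$ or by identifying $\Omega_q$ with the strutless wheels element associated to an unknot of framing $1/q$ (so that the formula reduces, in the unknot case, to $Z^{LMO}(L(p,q)) = \langle \Omega_q, \exp_\sqcup(-\tfrac{q}{2p}\strutd)\rangle$). Once this key identity is in hand, the combination with $Z^{\sf Wheel}(K) = \partial_{\Omega^{-1}}Z^\sigma(K)$ yields the stated formula, and the final passage to the reduced algebra via $\pi: \mathcal{A}(\emptyset) \to \mathcal{A}^{red}$ is automatic.
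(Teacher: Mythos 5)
The paper offers no proof of this theorem: it is imported from Bar-Natan and Lawrence \cite{bl} (stated here in the reduced form obtained by setting $\Aoned=0$), so strictly there is no in-paper argument to compare with. Your outline does reproduce the strategy of \cite{bl} --- continued-fraction presentation of $p/q$-surgery as integer surgery on $K$ together with a chain of meridional unknots, followed by iterated formal Gaussian integration --- but it contains a genuine gap at the central step. You justify the claim that each auxiliary component contributes ``a single factor of $\Omega$'' to the strutless integrand by appealing to the Wheels theorem $Z^{\sigma}({\sf Unknot})=\Omega$. That theorem computes the invariant of one unknot in isolation; the Kontsevich invariant of a link is not the disjoint union of the invariants of its components, and the components of your chain are pairwise Hopf-linked. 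What you actually need is the Hopf-link (``$1+1=2$'') formula of Bar-Natan--Le--Thurston \cite{blt}: the wheeled Kontsevich invariant of the Hopf link is $\exp_{\sqcup}$ of a single strut together with wheels on the components, with \emph{no} higher-order mixed diagrams. Saying the unknots are ``linked to their neighbors only through linking numbers'' at the level of Jacobi diagrams is exactly this nontrivial theorem, not a consequence of Wheels; without it the iterated integration has no closed form, and the Bernoulli generating-function identity you single out as the main obstacle is comparatively routine. Your proposed fallback --- identifying $\Omega_q$ with the wheels part of ``an unknot of framing $1/q$'' --- is circular, since giving meaning to a $1/q$-framed unknot inside the integer Aarhus machinery is precisely the rational surgery formula in the unknot case.

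One further point you should not wave away: the Aarhus integral is defined with normalization factors of the form $\langle\Omega,\Omega\sqcup\exp_{\sqcup}(\pm\tfrac{1}{2}\strutd)\rangle^{-\sigma_{\pm}}$ governed by the signature of the linking matrix of the chain presentation. Any proof must track these and show how they disappear; this is where the passage to $\mathcal{A}^{red}$ does real work, rather than being an ``automatic'' final step, and it is also why the theorem is stated here only for the reduced invariant.
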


As an application of the rational surgery formula above, in \cite[Proposition 5.1]{bl} it is shown that the (reduced) LMO invariant of the lens space $L(p,q)$ is given by the
\begin{equation}
\label{eqn:LMO-lens}
Z^{LMO}(L(p,q)) = \langle \Omega, \Omega^{-1} \sqcup \Omega_{p}\rangle
\end{equation}

Thus the (reduced) LMO invariant of Lens space only depend on $p$.
In particular,
\begin{equation}
\label{eqn:LMO23-Lens}
\lambda_{2}(L(p,q))= \frac{1}{24}\left( \frac{1}{48p^{2}}-\frac{1}{48} \right),\quad Z^{LMO}_{2m+1}(L(p,q))= 0 \ (m\in \Z).
\end{equation}

\section{Proof of Theorems}

First of all we determine which part of the Kontsevich invariant contributes to the degree $n$ part of the (reduced) LMO invariants.

\begin{proposition}
\label{prop:degree}
The degree $n$ part of the LMO invariant for $S^{3}(K,p\slash q)$ is determined by the slope $p\slash q$ and the bigrading $(e,k)$ part of $Z^{\sigma}(K)_{e,k}$ with $e+ \frac{k}{2} \leq n$.
\end{proposition}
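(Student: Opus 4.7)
The plan is to track the bigrading through the rational surgery formula of Theorem~\ref{theorem:rsurgery}, which gives
\[
Z^{LMO}(S^{3}(K,p/q)) = \Bigl\langle Z^{\mathsf{Wheel}}(K) \sqcup \Omega_{q},\; \exp_{\sqcup}\bigl(-\tfrac{q}{2p}\strutd\bigr)\Bigr\rangle,
\]
together with the identity $Z^{\mathsf{Wheel}}(K) = \partial_{\Omega^{-1}} Z^{\sigma}(K)$.

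First I would expand the strut exponential as $\sum_{m \ge 0} \tfrac{1}{m!}\bigl(-\tfrac{q}{2p}\bigr)^{m}\strutd^{m}$ and observe that $\strutd^{m}$ is homogeneous of bigrading $(-m, 2m)$. A direct Euler-characteristic computation, using $2E = V_{\mathrm{uni}} + 3V_{\mathrm{tri}}$ together with the gluing description of the pairing, shows that if $Y \in \mathcal{B}$ is of bigrading $(e, 2m)$ then $\langle Y, \strutd^{m}\rangle \in \mathcal{A}(\emptyset)$ has degree $e+m$. Consequently, the degree $n$ part of $Z^{LMO}(S^{3}(K,p/q))$ is a sum over $0 \le e \le n$ of terms coming from the bigrading $(e, 2(n-e))$ components of $X := Z^{\mathsf{Wheel}}(K) \sqcup \Omega_{q}$.

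Next, every $2j$-wheel component of $\Omega_{q}$ has bigrading $(0, 2j)$, so the $\sqcup$-product decomposes as
\[
X_{e, 2(n-e)} = \sum_{k_{1}+k_{2}=2(n-e)} Z^{\mathsf{Wheel}}(K)_{e, k_{1}} \sqcup (\Omega_{q})_{0, k_{2}},
\]
which expresses $X_{e, 2(n-e)}$ in terms of $q$ and the pieces $Z^{\mathsf{Wheel}}(K)_{e, k_{1}}$ with $k_{1}\le 2(n-e)$, i.e.\ those satisfying $e + k_{1}/2 \le n$. A parallel bookkeeping shows that for a Jacobi diagram $C$ of bigrading $(0, k_{C})$ the operator $\partial_{C}$ sends a $(e'',k'')$-homogeneous element to one of bigrading $(e''+k_{C},\, k''-k_{C})$. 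Since every component of $\Omega^{-1}$ has euler degree zero, $Z^{\mathsf{Wheel}}(K)_{e, k_{1}}$ is determined by those $Z^{\sigma}(K)_{e'', k''}$ with $e'' = e - k_{C}$, $k'' = k_{1} + k_{C}$ for some $k_{C}\ge 0$, and every such pair satisfies
\[
e'' + k''/2 = e + k_{1}/2 - k_{C}/2 \le e + k_{1}/2 \le n,
\]
as required.

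The main obstacle is the two Euler-characteristic computations underlying the proof -- that the pairing with a strut and the differentiation by an euler-degree-zero diagram each weakly decrease the quantity $e + k/2$. Both follow mechanically from the standard identities relating $\chi$ to the numbers of univalent and trivalent vertices, but it is these computations that pin down the sharp constraint $e + k/2 \le n$; once they are in place, the proposition follows by chaining the three reductions above.
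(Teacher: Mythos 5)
Your proof is correct and follows essentially the same route as the paper: both arguments track the bigrading $(e,k)$ through the three stages (pairing with struts, $\sqcup\,\Omega_{q}$, and $\partial_{\Omega^{-1}}$) and observe that each stage weakly decreases $e+\tfrac{k}{2}$. The only difference is that you spell out the Euler-characteristic bookkeeping that the paper leaves as "by definition of the pairing" and "by definition of $\partial_{D}$."
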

\begin{proof}

By definition of the pairing, for $D \in \mathcal{B}_{e,k}$ we have 
$\langle D, \exp_{\sqcup}(-\frac{q}{2p} \strutd ) \rangle \in \mathcal{A}(\emptyset)_{e+\frac{k}{2}}$. 
Thus the degree $n$ part of the LMO invariant of $S^{3}(K,p\slash q)$ is determined by the bigrading $(e,k)$ part of $Z^{\sf Wheel}(K) \sqcup \Omega_{q}$, with $e+\frac{k}{2}=n$.

The bigrading $(e,k)$ part of $Z^{\sf Wheel}(K) \sqcup \Omega_{q}$ is determined by the bigrading $(e',k')$ part of $Z^{\sf Wheel}(K)$ with $(e',k') \in \{(e,k), (e,k-2),(e,k-4),\ldots \}$.
Also, by definition of $\partial_{D}$, if $D \in  \mathcal{B}_{e,k}$ and $D' \in \mathcal{B}_{e',k'}$, $\partial_{D}(D') \in \mathcal{B}_{e'+e+k,k'-k}$. This shows that the bigrading $(e,k)$ part of $Z^{\sf Wheel}(K)$ is determined by the bigrading $(e',k')$ part of $Z^{\sf Wheel}(K)$ with $(e',k') \in \{(e,k), (e-2,k+2),(e-4,k+4),\ldots \}$.

These observations show that the degree $n$ part of the LMO invariant of $S^{3}(K,p\slash q)$ is determined by the bigrading $(e,k)$ part of $Z^{\sigma}(K)_{e,k}$ with $e+ \frac{k}{2} \leq n$ (and the surgery slope $p\slash q$).
\end{proof}

\begin{proof}[Proof of Theorem \ref{theorem:LMO23}]
By Proposition \ref{prop:degree}, to compute the degree 2 and 3 part of the LMO invariant for $S^{3}(K,p\slash q)$, it is sufficient to consider the bigraging $(e,k)$ part of $Z^{\sigma}(K)$ for $e + \frac{k}{2} \leq 3$. As we have already seen, this is given by
\begin{eqnarray*}
Z^{\sigma}(K)&\!\!=\!\!&1 +\bigl(v_2(K) + b_2\bigr) \Donetwod + v_3(K)\Dtwotwod+ \frac{1}{2}\bigl(v_2(K) +b_{2}\bigr)^{2} \Donetwod \Donetwod +(v_{4}(K)+b_{4}) \Donefourd\\ 
& &+ w_4(K) \Dthreetwod + v_3(K)(v_2(K)+b_2)\Donetwod \Dtwotwod + v_5(K)  \Dtwofourd + \frac{1}{6}\bigl(v_2(K) +b_{2}\bigr)^{3}\Donetwod \Donetwod\Donetwod \\
& &+\bigl( v_2(K)+b_2\bigr)\bigl( v_4(K)+b_4 \bigr)\Donetwod \Donefourd + \bigl(v_6(K) +b_{6}\bigr)\Donesixd+(\mbox{higher degree parts}).
\end{eqnarray*}
Here $b_2=\frac{1}{48}, b_{4}=-\frac{1}{5760}, b_{6}= \frac{1}{362880}$ are modified Bernoulli numbers.
Since 
\[ \begin{cases}
\partial_{\Omega^{-1}} \left( \Donetwod \right) = \Donetwod-2b_2\Atwod, \ 
\partial_{\Omega^{-1}} \left(\Dtwotwod \right) =\Dtwotwod -2 b_2\Athreed, \\
\partial_{\Omega^{-1}}\left(\Donetwod \Donetwod \right)= \Donetwod \Donetwod - 8b_2\Dthreetwod +\mbox{(other parts),}\\
\partial_{\Omega^{-1}}\left(\Donefourd\right)= \Donefourd - 10b_2\Dthreetwod + \mbox{(other parts)},
\end{cases}
\]
the wheeled Kontsevich invariant is given by
\begin{align*}
Z^{\sf Wheel}(K)&= 1 +\bigl(v_2(K) + b_2\bigr)\Donetwod
 + v_3(K)\Dtwotwod + \frac{1}{2}\bigl(v_2(K)+b_{2}\bigr)^{2}\Donetwod \Donetwod +(v_{4}(K)+b_{4}) \Donefourd \\
& \hspace{0.4cm} + w_4(K)\Dthreetwod + v_3(K)(v_2(K)+b_2) \Donetwod\Dtwotwod + v_5(K)\Dtwofourd + \frac{1}{6}\bigl(v_2(K) +b_{2}\bigr)^{3}\Donetwod \Donetwod \Donetwod  \\
& \hspace{0.4cm} + \bigl( v_2(K)+b_2\bigr)\bigl( v_4(K)+b_4 \bigr)\Donetwod \Donefourd +\bigl(v_6(K)+b_{6}\bigr)\Donesixd +\bigr(-2b_2 \bigr)\bigl(v_2(K)+ b_2\bigr)\Atwod \\
& \hspace{0.4cm}+ (-2b_2v_3(K))\Athreed +  \bigl(-8b_2\frac{1}{2}\bigl(v_2(K) +b_{2}\bigr)^{2}-10b_2v_4(K)\bigr)\Dthreetwod+ (\mbox{other parts}).
\end{align*}

Thus $Z^{\sf Wheel}(K)\sqcup \Omega_{q}$ is equal to
\begin{eqnarray*}
& & Z^{\sf Wheel}(K)\sqcup \Omega_{q}\\
&=& 1 +\bigl(v_2(K) + b_2 + \frac{b_2}{q^{2}}\bigr)\Donetwod + v_3\Dtwotwod \\
& & + \Bigl\{ \frac{1}{2}\bigl(v_2(K) +b_{2}\bigr)^{2} + (v_2(K)+b_2)\frac{b_2}{q^{2}}+ \frac{1}{2}\frac{b_2^2}{q^{4}}\Bigr\} \Donetwod \Donetwod \\
& &  + (v_{4}(K)+b_{4}+\frac{b_{4}}{q^{4}})\Donefourd + \bigl(w_4(K)-4b_2\bigl(v_2(K) +b_{2}\bigr)^{2}-10b_2v_4(K)\bigr)\Dthreetwod\\
& & + \bigl(v_3(K)(v_2(K)+b_2) + v_3(K)\frac{b_2}{q^{2}}\bigr) \Donetwod \Dtwotwod+ v_5(K)\Dtwofourd\\
& &+ \Bigl\{\frac{1}{6}\bigl(v_2(K)+b_{2}\bigr)^{3}+ \frac{1}{2}(v_2(K)+b_2)^{2}\frac{b_2}{q^{2}}+(v_{2}(K)+b_{2})\frac{1}{2}\frac{b_2^2}{q^{4}}+ \frac{1}{6}b_{2}^{3}q^{-6}\Bigr\}\Donetwod \Donetwod \Donetwod \\
& & + \Bigl\{ \bigl( v_2(K)+b_2\bigr)\bigl( v_4(K)+b_4 \bigr)+ (v_{4}(K)+b_{4})\frac{b_2}{q^{2}} + (v_{2}(K)+b_2)\frac{b_4}{q^{4}}+\frac{ b_{2}b_{4}}{q^{6}}\Bigr\}\Donetwod\Donefourd \\
& & +\bigl(v_6(K) +b_{6}+\frac{b_{6}}{q^{6}}\bigr)\Donesixd + \bigr(-2b_2 \bigr)\bigl(v_2(K)+ b_2\bigr)\Atwod + (-2b_2v_3(K)){\raisebox{-2mm}{\includegraphics*[width=4mm]{A_3.eps}}} + (\mbox{other parts})
\end{eqnarray*}
By direct computations (or, one use $\mathfrak{sl}_2$ weight system evaluation as we will use in Section \ref{section:sl2computation}), the pairing with struts (in $\mathcal{A}^{red}$) are given by
\[ \begin{cases}
\langle \Dtwotwod \pairingcommaa \strutd\rangle = 2 \Atwod \,, \quad
\langle \Donetwod \Donetwod \pairingcommaa \strutd^2 \rangle = 16 \Atwod \,,\quad
\langle \Donefourd \pairingcommaa \strutd^2 \rangle = 20 \Atwod \,,\\
\langle \Dthreetwod \pairingcommaa \strutd^2 \rangle = 2 \Athreed \,, \quad
\langle \Dtwotwod \Donetwod \pairingcommaa \strutd^2 \rangle = 16 \Athreed \,, \quad
\langle \Dtwofourd \pairingcommaa \strutd^2 \rangle = 20 \Athreed \,, \\
\langle \Donetwod\Donetwod\Donetwod \pairingcommaa \strutd^{3}\rangle = 384\Athreed \,, \quad 
\langle \Donetwod\Donefourd \pairingcommaa \strutd^{3}\rangle= 480 \Athreed \,,\\
\langle \Donesixd \pairingcommaa \strutd^{3}\rangle = 420 \Athreed.
\end{cases}
\]
Consequently, we get
\begin{eqnarray*}
\lambda_2(S^{3}_{K}(p/q)) &=& v_{3}(K)\bigl(-\frac{q}{2p}\bigr)\cdot 2 \\
\nonumber & &+ \left( \frac{(v_2(K)+b_{2})^{2}}{2} + \frac{ (v_2(K)+b_2)b_2}{q^{2}} + \frac{b_2^{2}}{2q^{4}} \right)\frac{1}{2}\bigl(-\frac{q}{2p}\bigr)^{2}\cdot 16\\
\nonumber & &+\left( v_{4}(K)+b_{4}+\frac{b_{4}}{q^{4}} \right) \frac{1}{2}\bigl(-\frac{q}{2p}\bigr)^{2}\cdot 20 + \bigr(-2b_2 \bigr)\bigl(v_2(K)+ b_2\bigr).
\end{eqnarray*}
\begin{eqnarray*}
\lambda_3(S^{3}(K,p/q)) &\!\! = \!\!&
\bigl(w_4(K)-4b_2\bigl(v_2(K) +b_{2}\bigr)^{2}-10b_2v_4(K)\bigr)\bigl( -\frac{q}{2p}\bigr) \cdot 2 \\
& & \hspace{-1.5cm}+ \left( (v_3(K)(v_2(K)+b_2) + \frac{v_3(K)b_2}{q^{2}}\right)\frac{1}{2}\bigl( -\frac{q}{2p}\bigr)^{2} \cdot 16 + v_{5}(K)\frac{1}{2}\bigl( -\frac{q}{2p}\bigr)^{2}\cdot 20\\ 
& &\hspace{-1.5cm} + \left( \frac{\bigl(v_2(K)+b_{2}\bigr)^{3}}{6}+ \frac{(v_2(K)+b_2)^{2}b_{2}}{2q^{2}}+\frac{(v_{2}(K)+b_{2})b_{2}^{2}}{2q^{4}}+ \frac{b_{2}^{3}}{6q^{6}}\right) \frac{1}{6}\bigl( -\frac{q}{2p}\bigr)^{3}\cdot 384\\ 
& &\hspace{-1.5cm} + \left( \bigl( v_2+b_2\bigr)\bigl( v_4+b_4 \bigr)+ \frac{(v_{4}(K)+b_{4})b_2}{q^{2}} + \frac{(v_{2}(K)+b_2)b_{4}}{q^{4}}+ \frac{b_{2}b_{4}}{q^{6}} \right) \frac{1}{6}\bigl( -\frac{q}{2p}\bigr)^{3}\!\!\cdot480\\
& &\hspace{-1.5cm} + \left(v_6(K)+b_{6}+\frac{b_{6}}{q^{6}}\right)\frac{1}{6}\bigl( -\frac{q}{2p}\bigr)^{3}\cdot 420 +  (-2b_2v_3(K)).
\end{eqnarray*}
Thus we conclude
\begin{align*}
\lambda_2(S^{3}(K,p/q))-\lambda_{2}(L(p,q)) & \\
& \hspace{-3cm} = 
\left( v_2(K)^{2} + \frac{1}{24}v_2(K)+\frac{5}{2}v_{4}(K)\right)\frac{q^{2}}{p^{2}}-v_3(K)\frac{q}{p} + \frac{v_2(K)}{24}\left( \frac{1}{p^{2}}-1\right) \\
& \hspace{-3cm} =
\left( \frac{7a_2(K)^2-a_2(K)-10a_{4}(K)}{8} \right)\frac{q^{2}}{p^{2}} -v_{3}(K)\frac{q}{p} + \frac{a_{2}(K)}{48}
\left(1-\frac{1}{p^{2}}\right),
\end{align*}
\begin{align*}
\lambda_{3}(S^{3}(K,p/q))-\lambda_{3}(L(p,q)) & \\
& \hspace{-4cm} = 
-\left(\frac{35}{4}v_6(K)+\frac{5}{24}v_4(K)+10v_2(K)v_4(K)+ \frac{4}{3}v_2(K)^{3}+\frac{1}{12}v_2(K)^{2} 
\right)\frac{q^{3}}{p^{3}}\\
& \hspace{-3.5cm}
-\left( \frac{5}{24}v_4(K) +\frac{1}{288}v_2(K)+\frac{1}{12}v_2(K)^{2}\right)\frac{q}{p^{3}}\\
& \hspace{-3.5cm}
+ \left(\frac{5}{2}v_{5}(K)+ 2v_3(K)v_2(K)+\frac{1}{24}v_3(K)\right)\frac{q^2}{p^2} + \frac{v_3(K)}{24}\left(\frac{1}{p^2}-1\right)\\
& 
\hspace{-3.5cm}- \left(w_4(K)-\frac{1}{12}v_2(K)^{2}-\frac{1}{288}v_2(K)-\frac{5}{24}v_4(K)\right)\frac{q}{p}
\end{align*}

\end{proof}

\begin{remark}
\label{remark:KKT}
In \cite[Theorem7.1]{le} Lescop proved a similar formula 
\begin{equation}
\label{eqn:Lescopformula}
\lambda_2^{KKT}(S^{3}(K,p\slash q)) = \lambda^{'' KKT}_{2}(K) \frac{q^{2}}{p^{2}} + w_3(K)\frac{q}{p} + c(p\slash q)a_{2}(K) + \lambda_{2}^{KKT}(L(p,q))
\end{equation}
for the degree two part $\lambda_2^{KKT}$ of the Kontsevich-Kuperberg-Thurston universal finite type invariant $Z^{KKT}$, which is defined by configuration space integrals \cite{ko2,kt}.
For degree two part we have $\lambda_{2}^{KKT} = 2 \lambda_2$ (note that in Lescop uses the coefficient of the Jacobi diagram ${\raisebox{-2mm}{\includegraphics*[width=5mm]{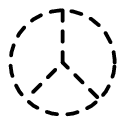}}} = \frac{1}{2}{\raisebox{-2mm}{\includegraphics*[width=4mm]{A2.eps}}}$) so Theorem \ref{theorem:LMO23} gives formulae of invariants in Lescop's formula (\ref{eqn:Lescopformula}), namely, 
\[ \lambda^{'' KKT}_{2}(K)=  \frac{7a_2(K)^2-a_2(K)-10a_{4}(K)}{4}, w_3(K)=-2v_3(K), c(p\slash q) = \frac{1}{24}-\frac{1}{24p^{2}}.\]
\end{remark}

\begin{proof}[Proof of Corollary \ref{corollary:LMO2}]
{$ $}\\

\noindent
(i,ii): 
By (\ref{eqn:NW}), it is sufficient to consider the case $r'= - r$.
Assume that $S^{3}(K,p/q) \cong \pm S^{3}(K,-p/q)$. Since $\lambda_2(M)=\lambda_2(-M)$, by Theorem \ref{theorem:LMO23} 
\[ \lambda_{2}(S^{3}(K,p/q)) - \lambda_{2}(\pm S^{3}(K,-p/q)) =  -2 v_{3}(K) q\slash p =0 \]
Therefore $v_{3}(K)=0$.\\

\noindent
(iii): Assume that $S^{3}(K,p/q) \cong \pm S^{3}(K, -p'/q')$. 
Since $H_{1}(S^{3}(K,p/q))\cong \Z\slash p\Z$, $p=p'$.
By Theorem \ref{theorem:LMO23} 
\begin{eqnarray*}
0 &= & \lambda_{2}(S^{3}(K,p/q)) - \lambda_{2}(- S^{3}(K,p/q')) \\
& =& \left(\frac{7a_{2}(K)^{2}-a_{2}(K)-10a_{4}(K)}{8}\right)\frac{q^{2}-q'^{2}}{p^{2}} - v_{3}(K)\frac{q-q'}{p}.
\end{eqnarray*}
Since $q' \neq \pm q$, either $v_{3}(K)\neq 0$ (and $7a_{2}(K)^{2}-a_{2}(K)-10a_{4}(K)=0$), or, 
\[ \frac{p}{q+q'} = \frac{7a_{2}(K)^{2}-a_{2}(K)-10a_{4}(K)}{8v_{3}(K)}.\]

\noindent
(iv): Assume that $S^{3}(K,p/q) \cong \pm S^{3}(K', p/q)$. By Theorem \ref{theorem:LMO1} (i), $a_{2}(K)=a_{2}(K')$. By Theorem \ref{theorem:LMO23} 
\[\lambda_{2}(S^{3}(K,p/q)) - \lambda_{2}(S^{3}(K',p/q)) = \frac{5}{4}(a_{4}(K)-a_{4}(K'))\frac{q^{2}}{p^{2}} - (v_{3}(K)-v_{3}(K'))\frac{q}{p}=0 .\]
\end{proof}

\begin{proof}[Proof of Corollary \ref{corollary:Lens}]
Assume that $S^{3}(K,p\slash q) \cong L(p',q')$. Then $|H_{1}(S^{3}(K,p\slash q);\Z)|=p=p'=H_{1}(L(p',q));\Z)$ so $p=p'$. By (\ref{eqn:LMO23-Lens}) $\lambda_{2}(L(p,q))=\lambda_{2}(L(p,q'))$ so Theorem \ref{theorem:LMO23} gives the desired equality.
\end{proof}

\begin{proof}[Proof of Corollary \ref{corollary:LMO3}]
(i): By (\ref{eqn:NW}), it is sufficient to consider the case $r'= -r$.
By Theorem \ref{theorem:LMO1} (ii) and Corollary \ref{corollary:LMO2} (i), $a_{2}(K)=v_2(K)=v_{3}(K)=0$. Thus by Theorem \ref{theorem:LMO23} 
\begin{eqnarray*}
 0 &=& \lambda_{3}(S^{3}(K,p/q)) - \lambda_{3}(S^{3}(K,-p/q))\\
& =& -\left( \frac{35}{2}v_{6}(K) + \frac{5}{12} v_4(K)\right) \frac{q^{3}}{p^{3}} - \frac{5}{12}v_{4}(K) \frac{q}{p^{3}} - \left( 2w_{4}(K)-\frac{5}{12}v_{4}(K)\right) \frac{q}{p}.
\end{eqnarray*}
(ii):
If $S^{3}_{K}(p/q)) \cong -S^{3}_{K}(-p/q)$ then by Corollary \ref{corollary:LMO2} (ii) $v_{3}(K)=0$. Therefore by Theorem
$ \lambda_{3}(S^{3}(K,p/q)) - \lambda_{3}(-S^{3}(K,-p/q)) = 5v_5(K)= 0$. 
\end{proof}

\begin{proof}[Proof of Corollary \ref{cor:c11}]

By Lemma \ref{lemma:AJ} and Corollary \ref{corollary:LMO3} (i), if $S^{3}(K,p\slash q) \cong S^{3}(K,-p\slash q)$ then we get
\begin{equation}
\label{eqn:obstruction}
(19a_{4}(K)+j_4(K))p^{2} -10a_{4}(K)-(420a_{6}(K)+80a_{4}(K))q^{2}=0.
\end{equation}

According to \cite{iw}, the cosmetic surgery conjecture was confirmed for knots with less than or equal to 11 crossings, with 8 exceptions
\[ 10_{33},10_{118},10_{146},11a_{91},11a_{138},11a_{285},11n_{86},11n_{157}
\]
in the table KnotInfo \cite{cl}. 

For these knots, the values of $a_{4},j_{4}$ and $a_{6}$ are given as follows.

\begin{table}[htbp]
\begin{tabular}{|c|c|c|c|c|c|c|c|c|} \hline
& $10_{33}$ & $10_{118}$ & $10_{146}$ & $11a_{91}$ & $11a_{138}$ &  $11a_{285}$ & $11n_{86}$ & $11n_{157}$ \\ \hline 
$a_{4}$ &
4 & 2 & 2 & 0 & 2 & 2& -2& 0     \\ \hline
$j_{4}$ &-12 &-6 & -6 & 0& -6 & -6& 6& 0\\ \hline
$a_{6}$ &0& 3 & 0 & -2& -2& -2& -1& -1\\ \hline
\end{tabular}
\end{table}

Note that (\ref{eqn:obstruction}) gives a diophantine equation of the form $ap^{2}-bq^{2}=c$, whose solvability can be checked algorithmically \cite{aa}. For these knots the equation (\ref{eqn:obstruction}) has no integer solutions, except the case $K=10_{118}$ (The author uses the computer program at \cite{so}. In the case $K=10_{118}$ we get the equation $32p^2-20-1420q^{2}=0$ which has the solutions $p=20u+1065v$ and $q=3u-160v$, where $(u,v)$ are the solutions of Pell's equation $u^2 - 2840v^2 = 1$.)

\end{proof}

Next we proceed to see higher degree part. To use $C_{n}$-equvalence assumption, we observe the following.

\begin{lemma}
\label{lemma:KontCn}
If $K$ and $K'$ are $C_{n+1}$-equivalent, then for $e+\frac{k}{2} \leq n+1$, then 
\[ (Z^{\sf Wheel}(K) \sqcup \Omega_{q} - Z^{\sf Wheel}(K) \sqcup \Omega_{q})_{e,k} = (Z^{\sigma}(K) \sqcup \Omega^{-1})_{e,k} - (Z^{\sigma}(K') \sqcup \Omega^{-1})_{e,k} \]
Similarly, if  $K$ and $K'$ are odd $C_{n+1}$-equivalent, then for $e+\frac{k}{2} \leq n+1$ with odd $e+k$, 
\[ (Z^{\sf Wheel}(K) \sqcup \Omega_{q} - Z^{\sf Wheel}(K) \sqcup \Omega_{q})_{e,k} = (Z^{\sigma}(K) \sqcup \Omega^{-1})_{e,k} - (Z^{\sigma}(K') \sqcup \Omega^{-1})_{e,k} \]
\end{lemma}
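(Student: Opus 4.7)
The plan is to reduce the identity to a statement purely about $\Delta := Z^\sigma(K) - Z^\sigma(K')$. By universality of the Kontsevich invariant, $C_{n+1}$-equivalence of $K$ and $K'$ is equivalent to $\Delta$ being supported in total degree $\geq n+1$, so $\Delta_{e',k'} = 0$ whenever $e' + k' \leq n$; analogously, odd $C_{n+1}$-equivalence forces the odd-degree part of $\Delta$ to vanish in total degree $\leq n$. Using $Z^{\sf Wheel} = \partial_{\Omega^{-1}} Z^\sigma$ and linearity, the claim becomes
\[
\bigl(\partial_{\Omega^{-1}}\Delta \sqcup \Omega_q\bigr)_{e,k} = \bigl(\Delta \sqcup \Omega^{-1}\bigr)_{e,k}, \qquad e + \tfrac{k}{2} \leq n + 1,
\]
and the odd version is the same identity restricted to bigrades with odd $e + k$.

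Next, I would expand $\Omega_q = \exp_\sqcup\bigl(\sum_\ell (b_{2\ell}/q^{2\ell}) \omega_{2\ell}\bigr)$ and $\Omega^{-1} = \exp_\sqcup\bigl(-\sum_\ell b_{2\ell}\omega_{2\ell}\bigr)$ as formal $\sqcup$-series in the wheels $\omega_{2\ell} \in \mathcal{B}_{0, 2\ell}$, and track bigrade contributions using the shift rules recalled in the proof of Proposition \ref{prop:degree}: $\partial_{\omega_{2\ell}}X \in \mathcal{B}_{e'+2\ell, \,k'-2\ell}$ while $X \sqcup \omega_{2\ell} \in \mathcal{B}_{e', \,k'+2\ell}$. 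Organizing these by non-negative integers $(j, j')$ recording the total wheel legs absorbed by $\partial_{\Omega^{-1}}$ and added via the final $\sqcup$ with $\Omega_q$ (resp.\ $\Omega^{-1}$), the bigrade $(e,k)$ contribution on the left-hand side becomes a sum indexed by $(j,j')$ involving $\Delta_{e-2j,\,k+2j-2j'}$, while the right-hand side is the subsum with $j = 0$ involving $\Delta_{e,\,k-2j'}$.

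The constraint $e + k/2 \leq n+1$ combined with $\Delta_{e',k'} = 0$ for $e' + k' \leq n$ cuts the admissible $(j, j')$ down to a small diagonal band. A term-by-term comparison then shows the two sums coincide: the mismatching wheel coefficients of $\Omega_q$ versus $\Omega^{-1}$ always multiply pieces of $\Delta$ of total degree $\leq n$, which vanish by the equivalence. For the odd assertion, the additional input is that both $\partial_{\omega_{2\ell}}$ and $\sqcup \omega_{2\ell}$ shift $e + k$ by an even amount, so at a bigrade with odd $e + k$ only odd-degree parts of $\Delta$ contribute, and odd $C_{n+1}$-equivalence kills exactly the relevant low-degree odd pieces.

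The hard part is the bookkeeping in the previous paragraph: to verify, after writing out the exponential expansions of $\Omega_q$ and $\Omega^{-1}$ and the multinomial coefficients coming from multiple $\partial_{\omega_{2\ell}}$ gluings, that every spurious term is killed either by the bigrade cut-off $e + k/2 \leq n+1$ or by the vanishing of $\Delta$ in low degrees. Once the shift rules are combined with the equivalence bounds, this reduces to a direct extension of the bigrade argument already carried out in the proof of Proposition \ref{prop:degree}.
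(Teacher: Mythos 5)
Your strategy is the same as the paper's: linearize everything in $\Delta:=Z^{\sigma}(K)-Z^{\sigma}(K')$, use that $C_{n+1}$-equivalence forces $\Delta_{e',k'}=0$ for $e'+k'\leq n$, and track how $\partial_{\Omega^{-1}}$ and $\sqcup\,\Omega_q$ (resp.\ $\sqcup\,\Omega^{-1}$) move bigradings. The gap is in the sentence claiming that ``the mismatching wheel coefficients of $\Omega_q$ versus $\Omega^{-1}$ always multiply pieces of $\Delta$ of total degree $\leq n$.'' That is correct for the $\sqcup$-wheel terms (adding $2j'$ wheel legs means the relevant piece of $\Delta$ has total degree $e+k-2j'\leq e+k-2$), but it fails for the $\partial_{\Omega^{-1}}$-terms: by the very shift rule you quote, $\partial_{\omega_{2\ell}}$ maps $\mathcal{B}_{e',k'}$ to $\mathcal{B}_{e'+2\ell,\,k'-2\ell}$, which \emph{preserves} the total degree $e'+k'$ while raising the euler degree. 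Hence the $(j\geq 1,\ j'=0)$ terms on the left-hand side involve $\Delta_{e-2j,\,k+2j}$ of total degree exactly $e+k$, which is \emph{not} killed by $C_{n+1}$-equivalence when $e+k=n+1$. (These terms simply do not occur when $e\leq 1$, since $e-2j<0$.) Relatedly, the range you work with, $e+\tfrac{k}{2}\leq n+1$, is too weak even for the $\sqcup$-wheel terms --- for example at $(e,k)=(0,2n+2)$ the $j'=1$ term involves $\Delta_{0,2n}$, of degree $2n>n$. The argument really supports only the range $e+k\leq n+1$, with the above caveat at $e+k=n+1$, $e\geq 2$.

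To be fair, the paper's own proof has the same soft spot: it asserts $\partial_{\Omega^{-1}}(D)=D+(\mbox{degree}\geq d+2)$, which contradicts the explicit computation earlier in the paper where $\partial_{\Omega^{-1}}$ applied to the two-legged wheel (bigrade $(0,2)$, degree $2$) produces a correction term in bigrade $(2,0)$, still of degree $2$; the correct statement is that the corrections have the same total degree but euler degree raised by at least $2$. The lemma survives as used because in Theorems \ref{theorem:LMOhigh1} and \ref{theorem:LMOhigh2} it is only invoked at bigradings with $e+\tfrac{k}{2}=m+1$ and $n=2m+1$ (or the odd analogues), where each relevant $(e,k)$ satisfies either $e\leq 1$ or $e+k\leq n$, so the problematic $\partial$-terms are absent or vanish. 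If you restrict to $e+k\leq n+1$ and add the observation that the $\partial$-generated terms require $e\geq 2$ (hence $e+k\leq n$ there), your term-by-term bookkeeping closes; your parity argument for the odd assertion is exactly the paper's.
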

\begin{proof}
Since $K$ and $K'$ are $C_{n}$ equivalent 
$Z^{\sigma}(K)_{e',k'} = Z^{\sigma}(K')_{e',k'}$ if $e'+k' \leq n$.
Since for the degree $d$ element $D$, the $\partial_{\Omega^{-1}}(D) = D+ (\mbox{degree} \geq d+2\mbox{ elements } )$, for $e+k \leq n+1$ 
\[
Z^{\sf Wheel}(K)_{e,k}-Z^{\sf Wheel}(K')_{e,k} = (\partial_{\Omega^{-1}}(Z^{\sigma}(K)-Z^{\sigma}(K'))_{e,k} =
(Z^{\sigma}(K)-Z^{\sigma}(K'))_{e,k}. \]
Therefore for $e+k \leq n+1$
\begin{eqnarray*}
(Z^{\sf Wheel}(K) \sqcup \Omega_{q} - Z^{\sf Wheel}(K') \sqcup \Omega_{q})_{e,k}& = &  \left( (Z^{\sf Wheel}(K) - Z^{\sf Wheel}(K')) \sqcup \Omega_{q}\right)_{e,k}\\
& = & \left( (Z^{\sigma}(K)-Z^{\sigma}(K'))\sqcup \Omega_q \right)_{e,k}\\
& = & (Z^{\sigma}(K)-Z^{\sigma}(K'))_{e,k}\\
& = & (Z^{\sigma}(K) \sqcup \Omega^{-1})_{e,k} - (Z^{\sigma}(K') \sqcup \Omega^{-1})_{e,k} 
\end{eqnarray*}

To see the latter assertion, we note that both $\partial_{\Omega^{-1}}$ and $\sqcup \Omega_{q}$ preserve the parity of $D$. Namely, 
 if $D \in \mathcal{B}_{odd}$, where we denote by $\mathcal{B}_{odd}$ the odd degree part of $\mathcal{B}$, then $\partial_{\Omega^{-1}}(D), D\sqcup \Omega_{q} \in \mathcal{B}_{odd}$.
\end{proof}

\begin{proof}[Proof of Theorem \ref{theorem:LMOhigh1}]
{$ $}\\

\noindent
(i): 
By Proposition \ref{prop:degree}, the degree $m+1$ part of the LMO invariant of $S^{3}(K,p\slash q)$ is determined by $(Z^{\sf Wheel}(K) \sqcup \Omega_{q})_{e,k}$ with $e+\frac{k}{2}=m+1$.

Since $K$ and $K'$ are $C_{2m+2}$-equivalent, for $e+\frac{k}{2}=m+1$
\begin{eqnarray*}
(Z^{\sf Wheel}(K) \sqcup \Omega_{q} - Z^{\sf Wheel}(K') \sqcup \Omega_{q})_{e,k} &=&(Z^{\sigma}(K)\sqcup \Omega^{-1})_{e,k} - (Z^{\sigma}(K')\sqcup \Omega^{-1})_{e,k}\\
& & \hspace{-2cm}=
 \begin{cases}
 -\frac{1}{2}(a_{2m+2}(K)-a_{2m+2}(K'))\overbrace{\raisebox{-4.5mm}{\includegraphics*[width=9mm]{D_12m.eps}}}^{2m+2} & (e,k)=(0,2m+2)\\
\quad 0 & \mbox{Otherwise},
\end{cases}
\end{eqnarray*}
by Lemma \ref{lemma:KontCn} and Proposition \ref{proposition:MMR}. Therefore
\begin{align}
\label{eqn:LMO_2m+1}
Z^{LMO}_{m+1}(S^{3}(K,p\slash q))-Z^{LMO}_{m+1}(S^{3}(K',p\slash q)) & \\ 
&  \nonumber \hspace{-6cm} = \left\langle -\frac{1}{2}(a_{2m+2}(K)-a_{2m+2}(K'))\overbrace{\raisebox{-4.5mm}{\includegraphics*[width=9mm]{D_12m.eps}}}^{2m+2} \pairingcommab \frac{1}{(m+1)!}\left(-\frac{q}{2p}\right)^{2m+1} \!\!\!\!\! \strutd^{m+1} \right\rangle \\
 \nonumber&\ \hspace{-6cm} = - \frac{a_{2m+2}(K)-a_{2m+2}(K')}{2(m+1)!}\left(-\frac{q}{2p}\right)^{m+1}
\left\langle \overbrace{\raisebox{-4.5mm}{\includegraphics*[width=9mm]{D_12m.eps}}}^{2m+2} \pairingcommab \strutd^{m+1} \right\rangle  \pairingperiod
\end{align}

As we will see in Lemma \ref{lemma:nonvanish},
$\left\langle \overbrace{\raisebox{-4.5mm}{\includegraphics*[width=9mm]{D_12m.eps}}}^{2m+2}  \pairingcommab \strutd^{m+1} \right\rangle \neq 0$.
This shows that $S^{3}(K,p\slash q) \cong S^{3}(K',p\slash q)$ implies $a_{2m+2}(K)=a_{2m+2}(K')$.\\

\noindent
(ii) By (\ref{eqn:LMO_2m+1}), when $K$ is $C_{4m+2}$-trivial, then
\[ 
Z^{LMO}_{2m+1}(S^{3}(K,p\slash q))-Z^{LMO}_{2m+1}(L(p,q)) = \left\langle -\frac{1}{2}a_{4m+2}(K)\overbrace{\raisebox{-4.5mm}{\includegraphics*[width=9mm]{D_12m.eps}}}^{4m+2} \pairingcommab \frac{1}{(2m+1)!}\left(-\frac{q}{2p}\right)^{2m+1} \!\!\!\!\!\! \strutd^{2m+1} \right\rangle \pairingperiod 
\]
By (\ref{eqn:NW}), if $S^{3}(K,p\slash q) \cong S^{3}(K,p'\slash q')$ then 
$\frac{p}{q}= - \frac{p'}{q'}$ and $Z^{LMO}_{2m+1}(L(p,q))-Z^{LMO}(L(p',q'))=0$ hence
\begin{eqnarray*}
0 &=& Z^{LMO}_{2m+1}(S^{3}(K,p\slash q))-Z^{LMO}_{2m+1}(S^{3}(K,-p\slash q))\\
 & =&  - \frac{a_{4m+2}(K)}{(2m+1)!}\left(-\frac{q}{2p}\right)^{2m+1} \left\langle\overbrace{\raisebox{-4.5mm}{\includegraphics*[width=9mm]{D_12m.eps}}}^{4m+2} \pairingcommab \strutd^{2m+1} \right\rangle \pairingperiod
\end{eqnarray*}
Therefore $a_{4m+2}(K)=0$.
\end{proof}

\begin{proof}[Proof of Theorem \ref{theorem:LMOhigh2}]
{$ $}\\

\noindent
(i): By the same argument as Theorem \ref{theorem:LMOhigh1} (i), if $K$ and $K'$ are $C_{2m+1}$-equivalent for $e+\frac{k}{2}=m+1$,
\begin{eqnarray*}
(Z^{\sf Wheel}(K) \sqcup \Omega_{q} - Z^{\sf Wheel}(K') \sqcup \Omega_{q})_{e,k} &=& (Z^{\sigma}(K)\sqcup \Omega^{-1})_{e,k} - (Z^{\sigma}(K')\sqcup \Omega^{-1})_{e,k}\\
& & \hspace{-3cm}= 
 \begin{cases}
 -\frac{1}{2}(a_{2m+2}(K)-a_{2m+2}(K'))\overbrace{\raisebox{-4.5mm}{\includegraphics*[width=9mm]{D_12m.eps}}}^{2m+2} & (e,k)=(0,2m+2)\\
(Z^{\sigma}(K) \sqcup \Omega^{-1} - Z^{\sigma}(K) \sqcup \Omega^{-1})_{1,2m} &  (e,k)=(1,2m)\\
0 & \mbox{Otherwise.}
\end{cases}
\end{eqnarray*}
Hence
\begin{eqnarray*}
Z^{LMO}_{m+1}(S^{3}(K,p\slash q))- Z^{LMO}_{m+1}(S^{3}(K',p\slash q)) & & \\
& &\hspace{-6cm} = \left\langle -\frac{1}{2}(a_{2m+2}(K)-a_{2m+2}(K'))\overbrace{\raisebox{-4.5mm}{\includegraphics*[width=9mm]{D_12m.eps}}}^{2m+2}
\pairingcommab \frac{1}{(m+1)!}\left(-\frac{q}{2p}\right)^{m+1} \!\!\!\! \strutd^{m+1} \right\rangle \\
& & \hspace{-5cm}+ \left\langle  (Z^{\sigma}(K)\sqcup \Omega^{-1})_{1,2m} - (Z^{\sigma}(K')\sqcup \Omega^{-1})_{1,2m} \pairingcommab \frac{1}{m!}\left(-\frac{q}{2p}\right)^{m}\!\!\!\! \strutd^{m} \right\rangle \pairingperiod
\end{eqnarray*}
Thus, if $S^{3}(K,p\slash q) \cong S^{3}(K', p\slash q)$ and $a_{4m+4}(K)=a_{4m+4}(K')$ then
\[ \left\langle \Bigl( (Z^{\sigma}(K)\sqcup \Omega^{-1})_{1,4m+2} - (Z^{\sigma}(K')\sqcup \Omega^{-1})_{1,4m+2} \Bigr), \strutd^{2m+1} \right\rangle =0.
\]

\noindent
(ii,iii) Assume that $K$ is odd $C_{4m+1}$-trivial.
Since
\begin{align*}
Z^{LMO}_{2m+1}(S^{3}(K,p\slash q)) = \sum_{e=0}^{m} \left\langle (Z^{\sf Wheel}(K)\sqcup \Omega_{q})_{2e,4m+2-4e} \pairingcommab \frac{1}{(2m+1-2e)!} \left( -\frac{q}{2p}\strutd \right)^{2m+1-2e} \right\rangle \\
+ \sum_{e=0}^{m} \left\langle (Z^{\sf Wheel}(K)\sqcup \Omega_{q})_{2e+1,4m-4e} \pairingcommab\frac{1}{(2m-2e)!} \left( -\frac{q}{2p}\strutd \right)^{2m-2e} \right\rangle
\end{align*}
we get
\begin{align*}
&0= Z^{LMO}_{2m+1}(S^{3}(K,p\slash q))-Z^{LMO}_{2m+1}(-S^{3}(K,-p\slash q)) = Z^{LMO}_{2m+1}(S^{3}(K,p\slash q)) + Z^{LMO}_{2m+1}(S^{3}(K,-p\slash q))\\
& \hspace{0.5cm}= 2 \sum_{e=0}^{m} \left\langle (Z^{\sf Wheel}(K)\sqcup \Omega_{q})_{2e+1,4m-4e} \pairingcommab \frac{1}{(2m-2e)!} \left( -\frac{q}{2p}\strutd \right)^{2m-2e} \right\rangle \pairingperiod
\end{align*}
By Lemma \ref{lemma:KontCn}, $(Z^{\sf Wheel}(K)\sqcup \Omega_{q})_{2e+1,4m-4e} = 0$ unless $e=0$. Moreover for $e=0$ we have
\[
(Z^{\sf Wheel}(K)\sqcup \Omega_{q})_{1,4m}=(Z^{\sf Wheel}({\sf Unknot})\sqcup \Omega_q)_{1,4m} + (Z^{\sigma}(K)\sqcup \Omega^{-1})_{1,4m} - (Z^{\sigma}({\sf Unknot})\sqcup \Omega^{-1})_{1,4m}.
\]
Since for any $X \in \mathcal{B}$ and $q \in \Z \setminus\{0\}$, $(X\sqcup \Omega_q^{\pm 1})_{1,k}= X_{1,k}$ we have
\[ (Z^{\sf Wheel}({\sf Unknot})\sqcup \Omega_q)_{1,4m} = (Z^{\sigma}({\sf Unknot})\sqcup \Omega^{-1})_{1,4m}.\]
Thus we get
\[ (Z^{\sf Wheel}(K)\sqcup \Omega_{q})_{1,4m} = (Z^{\sigma}(K)\sqcup \Omega^{-1})_{1,4m}.\]
Hence
\[ 0= \frac{2}{(2m)!}\left(-\frac{q}{2p}\right)^{2m} \!\!\left\langle (Z^{\sigma}(K)\sqcup \Omega^{-1})_{1,4m} \pairingcommaa \strutd^{2m} \right\rangle .
\]
(iii) is proved similarly.

\end{proof}

\begin{proof}[Proof of Corollary \ref{corollary:MM}]
Lemma \ref{lemma:nonvanish2} in next Section shows that if $(Z^{\sigma}(K)\sqcup \Omega^{-1})_{1,2m} \in K_{1,2m}$ then $j_{1,2m}(K)=0$.
\end{proof}

\section{Some $\mathfrak{sl}_2$ weight system computations}
\label{section:sl2computation}

In this section we use $(\mathfrak{sl}_2,V_n)$ weight system, which is a linear map $W_{(\mathfrak{sl_2},V_n)} : \mathcal{B} \ (\mbox{or, } \mathcal{A}(\emptyset))\rightarrow \C[[h]]$ that comes from the Lie algebra $\mathfrak{sl}_2$ and its $n$-dimensional irreducible representation, to confirm some assetions used in previous sections.

The image of $W_{\mathfrak{sl}_2,V_n}$ can be calculated recursively by the following relations \cite{cv}:
\begin{enumerate}
\item $W_{\mathfrak{sl_2}}(\raisebox{-2.5mm}{\includegraphics*[width=7mm]{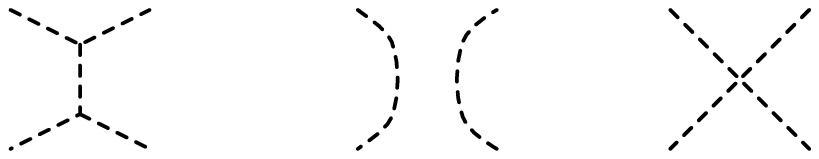}}) = 2h\Bigr(W_{\mathfrak{sl_2}}(\raisebox{-2.5mm}{\includegraphics*[width=7mm]{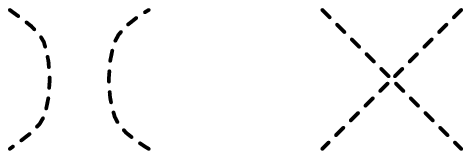}})- W(\raisebox{-2.5mm}{\includegraphics*[width=7mm]{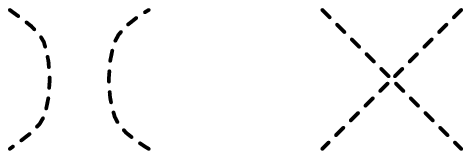}})\Bigl)$ 
\item $W_{\mathfrak{sl_2}}(\raisebox{-2.5mm}{\includegraphics*[width=7mm]{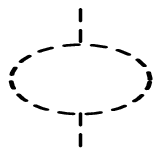}}) = 4h W_{\mathfrak{sl_2}}(\raisebox{-2.5mm}{\includegraphics*[width=7mm]{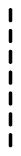}})$.
\item $W_{\mathfrak{sl_2}}(\raisebox{-3mm}{\includegraphics*[width=7mm]{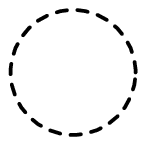}})=3$.
\item $W_{(\mathfrak{sl_2},V_n)}(D \sqcup \raisebox{-1mm}{\includegraphics*[width=7mm]{strut.eps}}) = h \frac{n^{2}-1}{2}W_{(\mathfrak{sl_2},V_n)}(D)$.
\end{enumerate}
Note that $\deg W_{(\mathfrak{sl_2},V_n)}(D) = \deg(D)$ and the relation (1)--(3) do not depend on $n$.  Thus for $D \in \mathcal{A}(\emptyset)$, $ W_{(\mathfrak{sl_2},V_n)}(D)$ does not depend on $n$ so we will simply write by $W_{\mathfrak{sl_2}}(D)$.

By the definition of the weight system and the colored Jones polynomial as quantum $(\mathfrak{sl}_2, V_n)$ invariant, we have
\begin{equation}
\label{eqn:Jones}
 W_{(\mathfrak{sl_2},V_n)}(Z^{\sigma}(K)\sqcup \Omega^{-1}) = V_{n}(K;e^{-h})= \sum_{e\geq 0}  \left(\sum_{k\geq 0} j_{e,k}(K) (nh)^{k}\right)h^{e}.
\end{equation}
We remark that we put the variable $t$ in the colored Jones polynomial not $e^{h}$ but $e^{-h}$, due to the difference of normalization of the colored Jones polynomial and quantum $\mathfrak{sl}_2$ invariants. 

We use this to check finite type invariants which we used can be written by Jones and Conway polynomials.

\begin{proof}[Proof of Lemma \ref{lemma:AJ}]
(1),(3) and (5) follow from Proposition \ref{proposition:MMR} so we prove (2) and (4).
The degree three and four part of $(Z^{\sigma}(K)\sqcup \Omega^{-1})$ are given $v_{3}(K)\Dtwotwod$ and $\frac{1}{8}a_{2}(K)^{2}\Donetwod\Donetwod - \frac{1}{2}a_{4}(K) \Donefourd+ w_{4}(K)\Dthreetwod$, respectively. Thus by (\ref{eqn:Jones}) applying $W_{(\mathfrak{sl_2},V_2)}$ we get
\begin{eqnarray*}
j_{3}(K)(-h)^{3} & = & v_{3}(K) W_{(\mathfrak{sl}_2, V_2)}(\Dtwotwod)= 24v_{3}(K)h^{3} \\
 j_{4}(K)(-h)^{4} & = & \frac{1}{8}a_{2}(K)^{2} W_{(\mathfrak{sl}_2, V_2)}(\Donetwod\Donetwod) - \frac{1}{2}a_{4}(K) W_{(\mathfrak{sl}_2, V_2)}( \Donefourd)+ w_{4}(K) W_{(\mathfrak{sl}_2, V_2)}(\Dthreetwod)\\
& = &\frac{1}{8}a_{2}(K)^{2}36h^{4} - \frac{1}{2}a_{4}(K) 18h^{4}+ w_{4}(K)96h^{4}\\
& = & \left(\frac{9}{2}a_{2}(K)^{2} -9a_{4}(K) + 96w_{4}(K)\right) h^{4}
\end{eqnarray*}
\end{proof}

For two Jacobi diagrams $D$ and $D'$ we write $D \equiv D'$ if $D$ is equal to $D'$ by using the $\mathfrak{sl}_2$ weight system relations (1)--(3) which are independent of $V_n$. 
By the $\mathfrak{sl}_2$ weight system relations (1)--(3) we remove all trivalent vertex of a Jacobi diagram when the number of univalent vertices are even. 

\begin{lemma}
\label{lemma:nonvanish}
$W_{\mathfrak{sl}_2} \Bigl(\Bigl\langle \overbrace{\raisebox{-4mm}{\includegraphics*[width=10mm]{D_12m.eps}}}^{2m} \pairingcommab \raisebox{-1mm}{\includegraphics*[width=7mm]{strut.eps}}^m \Bigr\rangle \Bigr) = 2(2h)^{m}(2m+1)!$. In particular,
 $\Bigl\langle \overbrace{\raisebox{-4mm}{\includegraphics*[width=10mm]{D_12m.eps}}}^{2m}  \pairingcommab \raisebox{-1mm}{\includegraphics*[width=7mm]{strut.eps}}^m \Bigr\rangle \neq 0$
\end{lemma}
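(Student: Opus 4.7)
The plan is to compute $W_{\mathfrak{sl}_2}\!\bigl(\bigl\langle w_{2m}, \strutd^{\,m}\bigr\rangle\bigr)$ directly from the weight-system rules (1)--(3). Since the pairing lies in $\mathcal{A}(\emptyset)$ the answer is a polynomial in $h$ independent of $n$, so any explicit formula matching the claim will suffice.

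First I would unpack the pairing. By definition it sums over bijections from the $2m$ legs of $w_{2m}$ to the $2m$ endpoints of $\strutd^{\,m}$, and two bijections that induce the same perfect matching of legs differ by a symmetry of $\strutd^{\,m}$ (swapping the two endpoints of each strut and permuting the $m$ struts), contributing a multiplicity of $2^{m}m!$ per matching. This gives
\[
\bigl\langle w_{2m},\,\strutd^{\,m}\bigr\rangle \;=\; 2^{m}\,m!\sum_{\mu} D_{\mu},
\]
where $\mu$ ranges over the $(2m-1)!!$ perfect matchings of the $2m$ legs and $D_{\mu}$ is the closed trivalent Jacobi diagram obtained from the $2m$-cycle of $w_{2m}$ by joining each matched pair of legs by a chord (the strut between them collapsing to an edge).

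Next I would evaluate $W_{\mathfrak{sl}_2}(D_{\mu})$. For the ``all adjacent'' matching $\mu_{0}=(1\,2)(3\,4)\cdots(2m{-}1,\,2m)$, each chord together with the cycle edge between the two corresponding trivalent vertices forms a bubble. Iterated application of rule~(2) contributes a factor $4h$ per bubble and smooths away the pair of trivalent vertices; after $m$ reductions only a single circle remains, which by rule~(3) evaluates to $3$. Hence $W_{\mathfrak{sl}_2}(D_{\mu_{0}}) = 3\cdot(4h)^{m}$, matching the base case $\langle w_{2},\strutd\rangle = 2\Aoned$ with $W_{\mathfrak{sl}_2}(\Aoned)=12h$. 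For a general matching the $I=2h(P-X)$ relation (rule~(1)) can be used to rewrite a chord joining two non-adjacent legs as a signed combination of diagrams with strictly shorter chords, inductively reducing to the adjacent case. Summing $W_{\mathfrak{sl}_2}(D_{\mu})$ over all $(2m-1)!!$ matchings and multiplying by $2^{m}m!$ should produce $2(2h)^{m}(2m+1)!$; the ``in particular'' statement is immediate because this polynomial in $h$ is nonzero.

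The main obstacle is the combinatorial blow-up when the $I$-relation is iterated on non-adjacent matchings. A cleaner route, which I would attempt first, is Lie-algebraic: identify each strut with the Killing-form contraction $\sum_{a}T^{a}\otimes T_{a}$ and the wheel $w_{2m}$ with the cyclic trace $\operatorname{tr}(\mathrm{ad}(X_{1})\cdots\mathrm{ad}(X_{2m}))$, so that the pairing becomes a fully contracted tensor expression on $\mathfrak{sl}_{2}$ (which is three-dimensional). Evaluating this contraction using the known spectrum of the Casimir on $\mathfrak{sl}_{2}$ should recover the closed form $2(2h)^{m}(2m+1)!$ in one stroke and bypass the matching-by-matching bookkeeping entirely.
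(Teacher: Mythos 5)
Your setup is correct as far as it goes: the pairing does decompose as $2^{m}m!\sum_{\mu}D_{\mu}$ over the $(2m-1)!!$ perfect matchings of the wheel's legs (since $(2m)!=2^{m}m!\,(2m-1)!!$), and your evaluation of the all-adjacent matching, $W_{\mathfrak{sl}_2}(D_{\mu_0})=3(4h)^{m}$ via rules (2) and (3), is right and consistent with the $m=1$ case. But the proof stops exactly where the content of the lemma begins. The assertion that ``summing $W_{\mathfrak{sl}_2}(D_{\mu})$ over all matchings \emph{should} produce $2(2h)^{m}(2m+1)!$'' is the lemma itself, and neither of your two proposed routes is executed. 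The rule-(1) reduction of a non-adjacent chord is highly sign-sensitive (the $P$ versus $X$ resolutions depend on the cyclic orders at the two vertices, and one of the resolutions typically produces a vanishing dumbbell-type term), and already at $m=2$ the required identity is $2^{2}\cdot 2!\,(48h^{2}+48h^{2}+W_{\mathfrak{sl}_2}(K_4))=2(2h)^{2}\cdot 5!$, forcing $W_{\mathfrak{sl}_2}(K_4)=24h^{2}$ --- a value you would have to compute and whose sign is easy to get wrong. For general $m$ the matchings are arbitrary chord diagrams on a $2m$-cycle, and no closed form or recursion for $W_{\mathfrak{sl}_2}(D_{\mu})$ is given. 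The Lie-algebraic alternative is genuinely viable (the sum over matchings is the Wick expansion of a Gaussian moment of $\operatorname{tr}(\mathrm{ad}(X)^{2m})$; since $\mathrm{ad}(X)$ on $\mathfrak{sl}_2$ has eigenvalues $0,\pm\mu$ with $\mu^{2}\propto h\lvert X\rvert^{2}$ and $E[\lvert X\rvert^{2m}]=(2m+1)!!$ in dimension $3$, one recovers $2(2h)^{m}(2m+1)!!\cdot 2^{m}m!=2(2h)^{m}(2m+1)!$), but as written it is only a declaration of intent, with the normalization constants --- where the factor $2h$ actually comes from --- left unchecked.

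For comparison, the paper avoids the matching-by-matching bookkeeping entirely by resolving the wheel's trivalent vertices \emph{before} pairing: applying rule (1) to the rim edges rewrites the $2m$-wheel as $(2h)^{m}\sum_{\be\in\{0,1\}^{m}}(-1)^{e_1+\cdots+e_m}D_{\be}$ where each $D_{\be}$ has no trivalent vertices. After pairing with $\strutd^{m}$, every $D_{\be}$ with $\be\neq 0$ yields the same diagram, whose value is $(2m+1)!$ by the strut-pairing formula $W_{\mathfrak{sl}_2}(\langle\strutd^{m},\strutd^{m}\rangle)=(2m+1)!$ of Bar-Natan--Le--Thurston, while $\be=0$ yields that diagram together with a disjoint circle contributing a factor $3$. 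The alternating sum then collapses to $(3-1)(2m+1)!$, giving the result. If you want to salvage your approach, you should either carry out the Lie-algebraic computation with explicit normalizations, or reorganize as the paper does so that the cancellation happens before you ever have to evaluate a general chord configuration.
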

\begin{proof}
First we observe that
\[
 \overbrace{\raisebox{-4mm}{\includegraphics*[width=10mm]{D_12m.eps}}}^{2m} \equiv (2h)^{m}\sum_{\be=(e_1,\ldots,e_m) \in \{0,1\}^{m}} (-1)^{e_1+\cdots+e_m} D_{\be} \\ 
\]
Here for $\be=(e_1,\ldots,e_m) \in \{0,1\}^{m}$, $D_{\be}$ denotes the Jacobi diagram\\
\[ \includegraphics*[width=70mm]{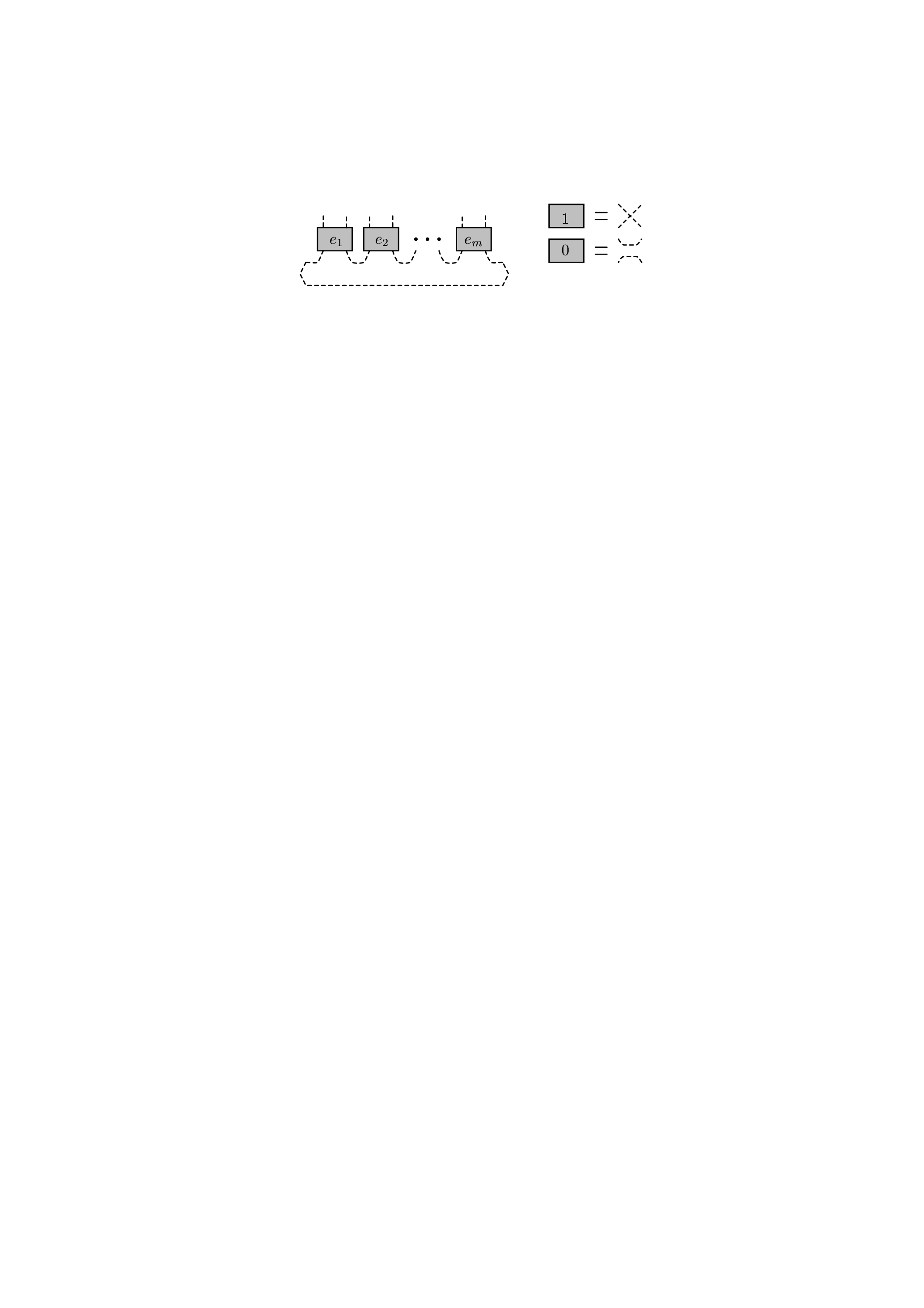}\]
Then the pairing $ \left\langle D_{\be}, \raisebox{-1mm}{\includegraphics*[width=7mm]{strut.eps}}^m \right\rangle = {\raisebox{-5mm}{\includegraphics*[width=20mm]{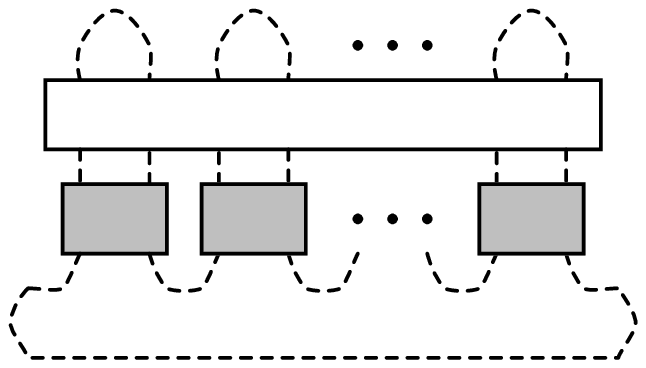}}}$
 is given by
\begin{equation}
\label{eqn:pairing0}
 {\raisebox{-5mm}{\includegraphics*[width=25mm]{d1.eps}}} = \begin{cases} \includegraphics*[width=30mm]{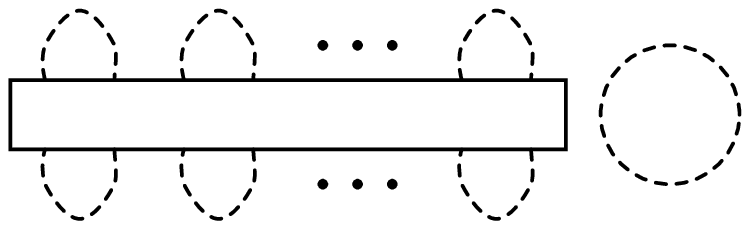} & (e_{1}=e_2=\cdots=e_m=0)\\
\includegraphics*[width=25mm]{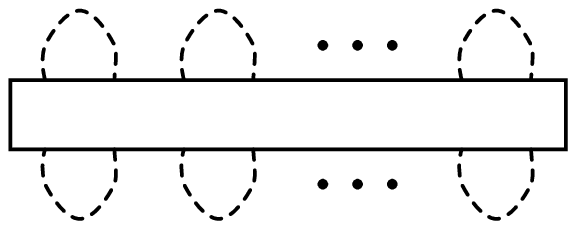} & (\text{otherwise})
\end{cases}
\end{equation}
By definition, $W_{\mathfrak{sl}_2}\left({\raisebox{-3mm}{\includegraphics*[width=20mm]{d2.eps}}}\right)= W_{\mathfrak{sl}_2}\left(\Bigl\langle\raisebox{-1mm}{\includegraphics*[width=7mm]{strut.eps}}^{m}, \raisebox{-1mm}{\includegraphics*[width=7mm]{strut.eps}}^{m} \Bigr\rangle \right) = (2m+1)!$ (see \cite[Lemma 6.1]{blt}).
Therefore by (\ref{eqn:pairing0})
\[
W_{\mathfrak{sl}_2} \Bigl(\Bigl\langle \overbrace{\raisebox{-4mm}{\includegraphics*[width=10mm]{D_12m.eps}}}^{2n}{ \raisebox{-2mm},} \raisebox{-1mm}{\includegraphics*[width=7mm]{strut.eps}}^m \Bigr\rangle \Bigr) = 2(2h)^{m}(2m+1)!
\]
\end{proof}

\begin{lemma}
\label{lemma:nonvanish2}
\[ W_{\mathfrak{sl_2}}\left(\left\langle (Z^{\sigma}(K)\sqcup \Omega^{-1})_{1,2m} {\raisebox{-2mm},} \
\raisebox{-1.8mm}{\includegraphics*[width=7mm]{strut.eps}}^{m}
\right\rangle \right) = 2^{m}h^{m+1} (2m+1)!  j_{1,2m}(K)\]
\end{lemma}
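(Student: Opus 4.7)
The plan is to use the colored Jones expansion \eqref{eqn:Jones} to extract $j_{1,2m}(K)$, and to relate this coefficient to the strut-pairing on the left-hand side.

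First, I would combine \eqref{eqn:Jones} with linearity of $W_{(\mathfrak{sl}_2,V_n)}$ along the bigrading decomposition of $Z^\sigma(K)\sqcup\Omega^{-1}$, yielding
\[
\sum_{(e,k)} W_{(\mathfrak{sl}_2,V_n)}\bigl((Z^\sigma(K)\sqcup\Omega^{-1})_{e,k}\bigr) = \sum_{(e,k)} j_{e,k}(K)\,n^k h^{e+k}.
\]
Since $W_{(\mathfrak{sl}_2,V_n)}$ on $\mathcal{B}_{e,k}$ is homogeneous in $h$ of degree $e+k$, the coefficient of $h^{2m+1}$ on the left is collected from bigradings with $e+k=2m+1$. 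A parity and degree check on the $n$-polynomials (the $n$-parity matches $k$, and reaching degree $2m$ in $n$ forces $k\geq 2m$) singles out $(e,k)=(1,2m)$ as the unique bigrading contributing the monomial $n^{2m}h^{2m+1}$. Hence
\[
\bigl[n^{2m}\,h^{2m+1}\bigr]\,W_{(\mathfrak{sl}_2,V_n)}\bigl((Z^\sigma(K)\sqcup\Omega^{-1})_{1,2m}\bigr)=j_{1,2m}(K).
\]

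Second, I would prove the combinatorial identity: for every $D\in\mathcal{B}_{e,2m}$,
\[
W_{\mathfrak{sl}_2}\bigl(\langle D,\strutd^m\rangle\bigr)=2^{m}\,(2m+1)!\,h^{e+m}\cdot \bigl[n^{2m}\,h^{e+2m}\bigr]\,W_{(\mathfrak{sl}_2,V_n)}(D).
\]
Both sides are linear in $D$, so it suffices to verify on a spanning family of $\mathcal{B}_{e,2m}$. For $D$ a wheel $W_{2m}\in\mathcal{B}_{0,2m}$ the identity reduces exactly to Lemma \ref{lemma:nonvanish} via the explicit evaluation $W_{(\mathfrak{sl}_2,V_n)}(W_{2m})=2h^{2m}(n^{2m}-1)$ (which itself follows by matching $W_{(\mathfrak{sl}_2,V_n)}(\Omega)$ with the normalized quantum dimension). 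For a general $D$, the proof proceeds via the $\mathfrak{sl}_2$ weight system rules (1)--(4): the pairing $\langle D,\strutd^m\rangle$ sums over the $(2m)!$ leg-pairings, each producing a closed Jacobi diagram in $\mathcal{A}(\emptyset)$, whose weight captures precisely the ``most disconnected'' part of the $V_n$-trace on $\chi(D)\in\mathcal{A}(S^1)$; the constant $2^m$ arises from bubble factors from rule (2), and $(2m+1)!$ counts the surviving pairings.

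Combining the two steps with $D=(Z^\sigma(K)\sqcup\Omega^{-1})_{1,2m}$ (so $e=1$) yields
\[
W_{\mathfrak{sl}_2}\bigl(\langle (Z^\sigma(K)\sqcup\Omega^{-1})_{1,2m},\strutd^m\rangle\bigr) = 2^m\,(2m+1)!\,h^{m+1}\,j_{1,2m}(K),
\]
which is the desired identity.

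The hard part will be the second step: establishing the general identity relating the strut-pairing to the leading-$n$ coefficient of the $V_n$-weight system on $\mathcal{B}_{e,2m}$. The wheel case is handled by Lemma \ref{lemma:nonvanish}, but tracking the combinatorial constant $2^m(2m+1)!$ for arbitrary $D$ requires a careful leading-order analysis of the trace over $V_n$, showing that the dominant $n^{2m}$ contribution always corresponds to the diagrammatic leg-pairing encoded by $\strutd^m$, independent of the internal loop structure of $D$.
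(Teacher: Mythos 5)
Your overall route is the paper's route: extract $j_{1,2m}(K)$ from \eqref{eqn:Jones} as the coefficient of $n^{2m}h^{2m+1}$ (your step 1 is fine, and your degree/parity argument isolating $(e,k)=(1,2m)$ is correct), and then relate the strut pairing to that top $n$-coefficient. The problem is that your second step — the identity $W_{\mathfrak{sl}_2}(\langle D,\strutd^{m}\rangle)=2^{m}(2m+1)!\,h^{e+m}\,[n^{2m}h^{e+2m}]W_{(\mathfrak{sl}_2,V_n)}(D)$ for all $D\in\mathcal{B}_{e,2m}$ — is exactly the content of the lemma, and you do not prove it: you verify it only for wheels (which do not span $\mathcal{B}_{e,2m}$ for $e\geq 1$, the case actually needed), and you explicitly defer the general case as ``the hard part.'' The missing idea is the normal form that makes this a one-line computation: relations (1)--(3) preserve the legs and remove every trivalent vertex, each removal costing one factor of $h$, and closed circles become the scalar $3$; hence every $D\in\mathcal{B}_{e,2m}$ satisfies $D\equiv c_D\,h^{e+m}\,\strutd^{m}$ for a single scalar $c_D$. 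From this, rule (4) gives $W_{(\mathfrak{sl}_2,V_n)}(D)=c_D\,h^{e+m}\bigl(h\tfrac{n^{2}-1}{2}\bigr)^{m}$, whose top $n$-coefficient is $c_D/2^{m}$, while $W_{\mathfrak{sl}_2}(\langle D,\strutd^{m}\rangle)=c_D\,h^{e+m}\,W_{\mathfrak{sl}_2}(\langle\strutd^{m},\strutd^{m}\rangle)=c_D\,h^{e+m}(2m+1)!$; the identity follows. Without this reduction your ``leading-order analysis of the trace'' remains an unproved assertion.

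Two of the supporting claims in your sketch are also off. The factor $2^{m}$ does not come from bubble factors in rule (2); it comes from rule (4), each strut contributing $\tfrac{n^{2}-1}{2}$ whose leading term is $\tfrac12 n^{2}$. And $(2m+1)!$ is not a count of ``surviving pairings'': it is the value $W_{\mathfrak{sl}_2}(\langle\strutd^{m},\strutd^{m}\rangle)$, a sum over all $(2m)!$ gluings weighted by powers of $3=\dim\mathfrak{sl}_2$. Finally, your wheel evaluation $W_{(\mathfrak{sl}_2,V_n)}(W_{2m})=2h^{2m}(n^{2m}-1)$ is wrong for $m\geq 2$; the normal-form computation (consistent with Lemma \ref{lemma:nonvanish}) gives $2h^{2m}(n^{2}-1)^{m}$. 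These agree at top order in $n$, which is all you use, but the discrepancy signals that the claimed derivation from the quantum dimension was not actually carried out.
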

\begin{proof}
Let us put $(Z^{\sigma}(K)\sqcup \Omega^{-1})_{e,2k} \equiv  c_{e,2k}(K) h^{e+k} \raisebox{-1mm}{\includegraphics*[width=7mm]{strut.eps}}^{k}$. Then 
\begin{align*}
W_{\mathfrak{sl}_2,V_{n}}((Z^{\sigma}(K)\sqcup \Omega^{-1})_{e,2k}) &=  e_{e,2k}(K) h^{e+2k}\left(\frac{n^{2}-1}{2}\right)^{k} \\
&\hspace{-3.2cm} = \frac{c_{e,2k}(K)}{2^{k}} h^{e}(nh)^{2k} -\frac{c_{e,2k}(K)}{2^{k}}kh^{e+2}(nh)^{2k-2}+ \frac{c_{e,2k}(K)}{2^{k}}\binom{k}{2}h^{e+4}(nh)^{2k-4}-\cdots. \\
\end{align*}
By (\ref{eqn:Jones}) we conclude $j_{1,2m}(K) = \frac{c_{1,2m}(K)}{2^{m}}$.Then the $\mathfrak{sl}_2$ weight system evaluation of the desired pairing is 
\begin{eqnarray*} W_{\mathfrak{sl_2}}\left( \left\langle(Z^{\sigma}(K)\sqcup \Omega^{-1})_{1,2m} {\raisebox{-2mm},} 
\raisebox{-1.8mm}{\includegraphics*[width=7mm]{strut.eps}}^{m}
\right\rangle \right)& = & 2^{m}j_{1,2m}(K)h^{m+1}  W_{\mathfrak{sl_2}}\left({\raisebox{-3mm}{\includegraphics*[width=20mm]{d2.eps}}}\right)\\
& = & 2^{m}h^{m+1} (2m+1)!  j_{1,2m}(K)
\end{eqnarray*}
 \end{proof}

\end{document}